\documentclass{amsart}
\usepackage{preamble}
\title{Deformation cones of 2-cubical combinatorial zonotopes}

\author{Juliana CURTIS, Grace O'BRIEN, Arnau PADROL, Germain POULLOT}

\makeatletter
\newcommand{\sectionnotoc}[1]{%
  {\let\addcontentsline\@gobblethree
   \section*{#1}}%
}

\AtEndDocument{\bigskip{\footnotesize%
\textsc{University of Washington, United States of America} \\
Juliana Curtis: \texttt{jacca@uw.edu} \hspace{2cm}
Grace O'Brien: \texttt{graceob@uw.edu} \\

\textsc{Universitat de Barcelona \& Centre de Recerca Matem\`atica, Spain} \\
Arnau Padrol: \texttt{arnau.padrol@ub.edu} \\

\textsc{Universit\"at Osnabr\"uck, Germany} \\
Germain Poullot: \texttt{germain.poullot@uni-osnabrueck.de} \\
}}

\begin{document}
\maketitle

\begin{abstract}
We prove that the dimension of the deformation cone of any polytope combinatorially equivalent to a $2$-cubical zonotope is bounded above by the number of generators of the zonotope. This bound is tight, and we show that it cannot be improved to an identity. We further extend our results to any $2$-cubical polytope whose normal fan is combinatorially isomorphic to a pseudohyperplane arrangement, but we show that it cannot be extended to arbitrary $2$-cubical polytopes.
\end{abstract}

\section{Introduction}

\df{Deformations} of a polytope $\pol$ admit many alternative definitions.
They can be equivalently described as the Minkowski summands of dilates of $\pol$, as the polytopes whose normal fans coarsen that of $\pol$, or as those polytopes with the same edge directions as~$\pol$, among others (see \Cref{def:deformation} for details).
The notion of deformation is a fundamental concept in polytope theory that arises naturally in several fields where Minkowski sums play a relevant role (\eg convex geometry, linear optimization, toric varieties, algebraic combinatorics), which explains why all these characterizations have flourished.
As a first introduction to this vast literature, the reader may consult~\cite{Shephard1963,McMullen-typeCone,Meyer,postnikov2008faces,Postnikov2009,DeLoeraRambauSantos,CoxLittleSchenck}. 

The set $\DC(\pol)$ of all deformations of $\pol$ has the structure of a polyhedral cone, called the \df{deformation cone} of $\pol$, which is pointed once translations are modded out. Even though several parameterizations are available for them (see \cite[Appendix 15]{postnikov2008faces}), deformation cones are in general hard to compute. There has been a recent effort by diverse teams to describe deformation cones of families of polytopes of prominent importance in combinatorics~\cite{ACEP-DeformationsCoxeterPermutahedra,CastilloLiu2020, AlbertinPilaudRitter,Castillo_2022,PadrolPaluPilaudPlamondon, PadrolPilaudPoullot-deformedNestohedra,BazierMatteDouvilleMousavandThomasYildirim,PPP25,loho2026rayssubmodularcone}.
Computing the dimension of a deformation cone $\DC(\pol)$ remains challenging.
In fact, deciding whether $\DC(\pol)$ is one-dimensional, which means that $\pol$ is \df{indecomposable}, is a central problem in the area, as indecomposable polytopes are the building blocks for Minkowski decompositions \cite{Gale1954,Shephard1963, kallay1982indecomposable,McMullen1987,PrzeslawskiYost2008,PrzeslawskiYost2016,PinedaVillavicencio,loho2026rayssubmodularcone,padrol2026graphimplicitedgedependencies}.
In general, any polytope normally equivalent to $\pol$ can be decomposed as a Minkowski sum of at most $\dim(\DC(\pol))$ indecomposable summands.
Furthermore, the linear span of $\DC(\pol)$ is also isomorphic to the space of $1$-weights of~$\pol$~\cite{McMullen1996weights} (equivalently, codimension-$1$ Minkowski weights on the normal fan of~$\pol$, see~\cite{FultonSturmfels1997IntersectionTheoryToricVarieties}) and to the first graded piece of the polytope algebra of~$\pol$~\cite{McMullen1989}, and thus their dimensions coincide.

Some of the examples above are \df{zonotopes}, that is, Minkowski sums of line segments. Despite their Minkowski sum presentation, their deformation cones can be very involved. A paradigmatic example is the \df{permutahedron}, which can be described as the zonotope $\pol[\Pi]_n = \sum_{1\leq i < j \leq n} [\b e_i, \b e_j]$ (\eg for $n = 3$, in the plane, $\pol[\Pi]_3$ is a regular hexagon).
Understanding the rays of its deformation cone is a longstanding open problem posed by Edmonds \cite{Edmonds}. 

There is nevertheless a family of zonotopes whose deformation cones are very well behaved: the \df{$2$-cubical zonotopes}, \ie those whose $2$-faces are all parallelograms, or equivalently such that no three of their line segment summands are coplanar.
By~\cite[Example 4.3.6]{padrol2026graphimplicitedgedependencies}, the deformation cone of a $2$-cubical zonotope $\polZ$ is a simplicial cone whose dimension equals the number of segments generating~$\polZ$.

The $d$-cube is a canonical example of a $2$-cubical zonotope, and thus its deformation cone is again simplicial of dimension~$d$. It was proven by Castillo, Doolittle, Goeckner, Ross and Ying in~\cite{Castillo_2022} that the deformation cone of any polytope combinatorially equivalent to the $d$-cube is simplicial of dimension~$d$.
This result is highly surprising because the deformation cone $\DC(\pol)$ heavily depends on the actual geometry of~$\pol$ (parallelisms of edges, angles between the facets, etc.), and not only on its combinatorial type.
Thus, it is natural to wonder whether this also extends to polytopes combinatorially equivalent to other $2$-cubical zonotopes, which we call \df{$2$-cubical combinatorial zonotopes}.
In this direction, we prove in \Cref{sec:result} the following (where the dimension is that of the edge deformation cone, after translations are modded out, see \Cref{rmk:OnDimension}):

\begin{thmUniv}[{\Cref{thm:Aproof}}]\label{thm:combo_zonotope}
Let $\polZ$ be a $2$-cubical zonotope which is the Minkowski sum of $m$ line segments. 
Let~$\pol$ be a polytope combinatorially equivalent to $\polZ$. Then, $\dim(\DC(\pol)) \leq m$.
\end{thmUniv}

Note that this result does not hold for non-$2$-cubical zonotopes. For instance, the aforementioned permutahedron $\pol[\Pi]_n$ is not $2$-cubical (some of its $2$-faces are hexagons), and $\dim(\DC(\pol[\Pi]_n)) = 2^{n} - n - 1$.
Yet $\pol[\Pi]_n$ is the Minkowski sum of $\binom{n}{2} < 2^n - n - 1$ line segments.

For $2$-cubical zonotopes, the bound of \Cref{thm:combo_zonotope} is attained, see \cite[Example~4.2.7, point~4]{padrol2026graphimplicitedgedependencies}. 
Yet, \Cref{thm:combo_zonotope} is in general not an equality. In \Cref{ssec:RhombicDodechedron} we construct a $2$-cubical combinatorial zonotope $\RhombicDodeca$ whose deformation cone $\DC(\RhombicDodeca)$ is not simplicial and has dimension strictly less than its number of generators.
This shows that one cannot extend the results of~\cite{Castillo_2022} to $2$-cubical zonotopes beyond \Cref{thm:combo_zonotope}.

The same example also minimizes $\dim(\DC(\pol))$ among $3$-dimensional polytopes~$\pol$ without triangular faces: indeed, building on Smilansky's \cite[Theorem~4.3]{smilansky1987decomposability}, we prove the following.

\begin{thmUniv}[{\Cref{cor:smil_dc2_two_triangles}}]\label{thmB}
If $\pol$ is a $3$-dimensional polytope with $\dim(\DC(\pol)) = 2$, then $\pol$ must have at least two triangular facets.
In particular, if $\pol$ is a cubical $3$-dimensional polytope, then $\dim(\DC(\pol)) \geq 3$. 
Moreover, if $\pol$ is a $3$-dimensional polytope with no triangular facets and $\dim(\DC(\pol)) = 3$, then~$\pol$ must be a cubical polytope.
\end{thmUniv}

The number of generators of a zonotope coincides with its number of \df{zones}.
In any $2$-cubical polytope (\ie one whose $2$-faces are quadrilaterals), we can define an equivalence relation on the edges by relating all pairs of opposite edges in a quadrilateral $2$-face.
The \df{zones} of a $2$-cubical polytope are the equivalence classes of this relation.
It is natural to ask whether \Cref{thm:combo_zonotope} can be extended to a larger family of $2$-cubical polytopes. 

In \cref{ssec:OrientedMatroidLikePol}, we give a positive answer to this by extending our result from $2$-cubical combinatorial zonotopes (which are dual to hyperplane arrangements) to $2$-cubical pseudozonotopes (which are dual to pseudohyperplane arrangements). Even though this result is strictly stronger than \cref{thm:combo_zonotope}, we kept the proof for zonotopes self-contained so that it is accessible to a broader audience not familiar with oriented matroids.

\begin{thmUniv}[{\Cref{thm:combo_om}}]\label{thmC}
Let $\om$ be a simple $2$-cocubical oriented matroid on a ground set of size $m$, and $\pol$ a polytope combinatorially equivalent to $\om$. Then, $\pol$ has a deformation cone of dimension at most~$m$.
\end{thmUniv}

To conclude, in \Cref{ssec:CubicalNotWork} we show that it is impossible to extend our results to all $2$-cubical polytopes, by giving a $2$-cubical polytope with a single zone but a $3$-dimensional deformation cone.

\sectionnotoc{Acknowledgements} 

This research was partially conducted during the Intensive Research Program on Combinatorial Geometries and Geometric Combinatorics, held at the Centre de Recerca Matemàtica in October-November 2025, and funded by the Severo Ochoa and María de Maeztu Program for Centers and Units of Excellence in R\&D (CEX2020-001084-M), the Institut de Matemàtica de la Universitat de Barcelona, and the Spanish projects PID2022-137283NB-C21 and RED2024-153572-T of MICIU/AEI/10.13039/501100011033.
We want to thank the participants and organizers for many fruitful interactions. In particular, A.P. wants to thank Florian Rieg and Raman Sanyal for many interesting discussions on cubical polytopes.

The research of A.P. is supported by projects PID2022-137283NB-C21 of MICIU/AEI/10.13039/ 501100011033/FEDER UE, the
Spanish-German project COMPOTE (AEI PCI2024-155081-2 \& DFG 541393733), as well as the Mar{\'i}a de Maeztu Program CEX2020-001084-M.

G.O. is supported by the National Science Foundation Graduate
Research Fellowship Program under Grant No. DGE-2140004. Any opinions,
findings, and conclusions or recommendations expressed in this material are those of the
authors and do not necessarily reflect the views of the National Science Foundation.

\vspace{0.25cm}
\sectionnotoc{Use of AI declaration}

The only use we made of AI (namely OpenAI's ChatGPT using GPT-5.5 and Anthropic's Claude using various models) was for English grammar and spelling assistance,
to get ideas on good names for some notions (\eg Claude suggested the terms ``$2$-cubical'' and ``$2$-cocubical''),
to facilitate the encoding of a hand-drawn picture as the TikZ diagram of \cref{fig:colored_zonotope},
and to assist in interpreting and expanding the results presented in \cite{smilansky1987decomposability} leading to \Cref{lem:smil_app}.

Except for this, all statements, proofs, concepts, notations, drawings, references to the literature, etc., were made by us, as humans, using non-AI-powered tools.

\vspace{0.25cm}
\section{Preliminaries}\label{sec:prelim}

We begin by reviewing some basic definitions. 
A \df{polytope} $\pol$ is the convex hull of finitely many points in $\R^n$. 
Letting $\b c \in \R^n$, the \df{face of $\pol$ in the direction of $\b c$} is $\pol^{\b c} \eqdef \{\b x \in \pol : \langle \b x, \b c\rangle = \max_{\b y \in \pol} \langle \b y, \b c \rangle \}$, where $\langle \cdot, \cdot \rangle$ denotes the usual inner product on $\R^n$. 
The \df{face lattice} $\L(\pol)$ is the partially ordered set on the faces of $\pol$, ordered by inclusion. 
Two polytopes are \df{combinatorially equivalent} if their face lattices are isomorphic. 
For a face $\polF$ of $\pol$, its \df{normal cone} is $\Nf_{\pol}(\polF) \eqdef \{\b c \in \R^n : \pol^{\b c} \subseteq \polF \}$. 
The \df{normal fan} of $\pol$, denoted $\Nf(\pol)$, is the collection of the normal cones of its faces. 
In a $d$-dimensional fan, we refer to its $d$-dimensional cones as \df{chambers} and to its $(d-1)$-dimensional cones as \df{walls}.
There is an inclusion-reversing bijection between the faces of $\pol$ and the cones of its normal fan, \eg the edges of a polytope correspond to the walls of its normal fan.

Two polytopes are \df{normally equivalent} (\aka \emph{Minkowski equivalent}) if they have the same normal fan. The \df{Minkowski sum} of two polytopes $\pol$ and $\polQ$ is
$$
    \pol + \polQ = \{\b p + \b q : \b p \in \pol, \b q \in \polQ\}.
$$

We say that $\polQ$ is a \df{weak Minkowski summand} of $\pol$ if there exists a scalar $\lambda > 0$ and a polytope $\polR$ such that $\polQ + \polR = \lambda \pol$. 
Our main motivation is to examine when polytopes can be broken down into nontrivial Minkowski sums. 
    A polytope $\pol$ is \df{indecomposable} if there is no nontrivial Minkowski decomposition of $\pol$, \ie whenever $\pol = \polQ + \polR$ then $\polQ$ and $\polR$ must be (translated) dilates of $\pol$.

Historically, indecomposable polytopes have been studied in various contexts and with various notation, from the original definition of the Minkowski sum by Minkowski in 1911 \cite{Minkowski1911} to the first indecomposability criteria by Gale in 1954 \cite{Gale1954} and the flourishing literature on indecomposability of polytopes \cite{Gale1954,Shephard1963, kallay1982indecomposable,McMullen1987,PrzeslawskiYost2008,PrzeslawskiYost2016,PinedaVillavicencio,loho2026rayssubmodularcone,padrol2026graphimplicitedgedependencies}.
We adopt the presentation and summary of \cite{loho2026rayssubmodularcone}. 
In particular, we use the following definition.
\begin{definition}[{\cite[Definition~2.1]{loho2026rayssubmodularcone}}] \label{def:deformation}
    A polytope $\polQ$ is a \df{deformation} of a polytope $\pol$ %
    if it satisfies any of the following equivalent properties (proved in \cite{postnikov2008faces}).
    \begin{enumerate}[(i)]
        \item \label{item:wms} $\polQ$ is a weak Minkowski summand of $\pol$.
        \item The normal fan of $\polQ$ coarsens the normal fan of $\pol$, \ie every cone $\pol[C] \in \Nf(\polQ)$ is partitioned into cones $\pol[D]_1, \dots, \pol[D]_r\in \Nf(\pol)$ satisfying $\pol[C] = \bigcup_i \pol[D]_i$.
        \item \label{item:edgelengths} $\polQ$ is obtained from $\pol$ by moving the vertices so that the directions of all edges are preserved, \ie there is a (not necessarily injective) map $\b q_i \mapsto \b p_i$ from the vertices of $\polQ$ to the vertices of $\pol$ such that for every edge $\b p_i \b p_j$ of $\pol$, we have $\b q_i - \b q_j = \lambda_{ij} (\b p_i - \b p_j)$ for some $\lambda_{ij} \geq 0$.
    \end{enumerate}
\end{definition}
It follows that a polytope is indecomposable exactly when its only deformations are its (translated) dilates. 

The set of deformations of $\pol$ is the usual and most intuitive definition for what is known as the deformation cone of $\pol$. 
However, there are several linearly isomorphic formulations for the deformation cone, cataloged in \cite[Appendix 15]{postnikov2008faces}. 
Instead of employing the direct definition, we will work with what is sometimes called the \df{edge deformation cone}, defined below, in which translations are already modded out.

\begin{definition}\label{def:DeformationCone}
    Let $E$ be the set of edges of a polytope $\pol$. The \df{deformation cone} of $\pol$, denoted $\DC(\pol)$, is the set of edge-length vectors $\b \lambda \in \R^E$ such that $\lambda_e \geq 0$ for each $e \in E$ and 
    \begin{equation} \label{eq:edge_lengths}
        \lambda_{\b v_1 \b v_2} (\b v_2 - \b v_1) + 
        \lambda_{\b v_2 \b v_3} (\b v_3 - \b v_2) + \dots + 
        \lambda_{\b v_k \b v_1} (\b v_1 - \b v_k) = 0
    \end{equation}
    for every $2$-face of $\pol$ with cyclically ordered vertices $\b v_1, \dots, \b v_k$.
\end{definition}

Thus, the $\b \lambda$ are representations of how each edge of $\pol$ may be scaled while preserving its direction to result in a deformation of $\pol$;
each $\b \lambda$ corresponds to a class of deformations of $\pol$ which are equal up to translation.
Hence, the deformation cone $\DC(\pol)$ is the set of all deformations up to translation.
As the conditions ``$\lambda_e\geq 0$'' and \Cref{eq:edge_lengths} are linear equations and linear inequalities, we see that, as the name indicates, the deformation cone of a polytope is indeed a polyhedral cone.
Consequently, a polytope $\pol$ is indecomposable if and only if its deformation cone is a ray, \ie if and only if $\dim(\DC(\pol)) = 1$.

\begin{remark}\label{rmk:OnDimension}
In the edge deformation cone of $\pol\subseteq \R^d$, the deformations are presented up to translation, hence $\DC(\pol)$ is a pointed cone that does not contain a lineality space.
This differs from the height deformation cone, see \eg \cite{postnikov2008faces,PadrolPaluPilaudPlamondon,PadrolPilaudPoullot-deformedNestohedra,PPP25}, where the same cone is presented with a lineality space of dimension $d$ corresponding to the group of translations.
We insist that the ``$\dim(\DC(\pol))$'' that we discuss throughout this paper is the dimension of the lineality-free cone $\DC(\pol)$ (for instance, it is a lower bound on the number of rays of $\DC(\pol)$).
\end{remark}

Generalizing the search for indecomposability criteria, we are motivated to characterize the dimension of deformation cones.
The above definition leads us to examine the relationships between the edge lengths of a deformation of a polytope $\pol$.
Taking $E$ to be the set of edges of~$\pol$, let $\b \lambda \in \R^{E}_+$ be the edge-length vector corresponding to a deformation of $\pol$.
We say $T \subseteq E$ is \df{dependent on} $S \subseteq E$ if the set $\{ \lambda_{e'} : e' \in T\}$ can be determined from the set $\{ \lambda_{e} : e \in S\}$, meaning that for all $e'\in T$ the exact value of the variable $\lambda_{e'}$ is determined by the system formed by \Cref{eq:edge_lengths} for all $2$-faces of $\pol$ together with the exact values of $\lambda_e$ for $e\in S$.
Further, we say $S \subseteq E$ is a \df{witness for $\pol$} if all of $E$ is dependent on $S$. We say that $S\subseteq E$ is a \df{combinatorial witness} for $\pol$ if for every $\pol'$ combinatorially isomorphic to $\pol$, the set $\phi(S) \subseteq E(\pol')$ is a witness for~$\pol'$, where $\phi$ denotes the combinatorial isomorphism from $\pol$ to $\pol'$.
The names are justified by the following lemma, which puts an upper bound on the dimension of $\DC(\pol)$.

\begin{lemma}\label{lemma:main_idea}
If $S$ is a witness for a polytope $\pol$, then $\dim(\DC(\pol)) \leq |S|$.
Moreover, if it is a combinatorial witness for $\pol$, then $\dim(\DC(\pol')) \leq |S|$ for any $\pol'\cong \pol$.
\end{lemma}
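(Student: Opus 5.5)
The plan is to view $\DC(\pol)$ inside the linear subspace $L \subseteq \R^E$ cut out by the equations \eqref{eq:edge_lengths}, one for each $2$-face of~$\pol$. Since $\DC(\pol) = L \cap \R^E_{\geq 0}$, we have $\dim(\DC(\pol)) \leq \dim(L)$, so it suffices to bound $\dim(L)$ by $|S|$.

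Consider the coordinate projection $\pi_S \colon L \to \R^S$, $\b\lambda \mapsto (\lambda_e)_{e\in S}$. The key step is to show that $\pi_S$ is injective. Suppose $\b\lambda, \b\mu \in L$ satisfy $\pi_S(\b\lambda) = \pi_S(\b\mu)$. Because $S$ is a witness, the values $\lambda_e$ for $e\in S$ determine every $\lambda_{e'}$ through the system \eqref{eq:edge_lengths}. Hence $\b\lambda = \b\mu$. By rank--nullity, $\dim(L) \leq |S|$, and therefore $\dim(\DC(\pol)) \leq |S|$.

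The only delicate point is a definitional one. The notion of ``dependent on'' is phrased for edge-length vectors $\b\lambda \in \R^E_+$ coming from deformations, whereas the injectivity argument needs it on all of $L$. If the definition is read linearly, meaning the $\lambda_{e'}$ are determined by the linear system together with the prescribed values on $S$, then it applies verbatim to every vector of $L$ and the argument above goes through directly.

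If instead one insists on restricting to nonnegative vectors, we argue inside the linear span of $\DC(\pol)$. Take $\b\lambda, \b\mu \in \DC(\pol)$ agreeing on~$S$. The witness property forces $\b\lambda = \b\mu$, so $\pi_S$ is injective on the cone $\DC(\pol)$. This does not immediately give injectivity on the span, so we use a relative-interior trick. Pick $\b\lambda$ in the relative interior of $\DC(\pol)$, and let $\b\nu \in \operatorname{span}(\DC(\pol))$ satisfy $\pi_S(\b\nu)=0$. For small $\varepsilon>0$, the vector $\b\lambda+\varepsilon\b\nu$ still lies in $\DC(\pol)$ and agrees with $\b\lambda$ on~$S$. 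Hence $\b\nu = 0$, so $\pi_S$ is injective on the span, and $\dim(\DC(\pol)) \leq |S|$.

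For the second assertion, let $\pol' \cong \pol$ via $\phi$. By definition of a combinatorial witness, $\phi(S)$ is a witness for $\pol'$, and $|\phi(S)| = |S|$ since $\phi$ is a bijection on edges. Applying the first part to $\pol'$ gives $\dim(\DC(\pol')) \leq |S|$.
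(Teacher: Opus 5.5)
Your proof is correct and is essentially the paper's argument. The paper notes that the witness property makes each $\lambda_{e'}$, $e'\in E\setminus S$, a linear function of $(\lambda_e)_{e\in S}$, so $\DC(\pol)$ lies in an $|S|$-dimensional linear subspace; this is exactly your injectivity of $\pi_S$ on $L$ followed by rank--nullity, and your relative-interior argument for the nonnegative reading of ``dependent'' is a valid extra safeguard that the paper does not need, since it phrases dependence through the linear system \eqref{eq:edge_lengths}.
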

\begin{proof}
Let $\pol$ be a polytope with set of edges $E$, and suppose $S \sbe E$ is a witness for $\pol$.
Let $\b \lambda \in \R^E$ be the edge-length vector corresponding to a deformation of $\pol$.
The $\lambda_e$ for $e \in E \sm S$ are linear functions of the $\lambda_e$ for $e \in S$, and it follows that $\b \lambda$ lies in an $|S|$-dimensional linear subspace of~$\R^E$. 
Thus, $\dim(\DC(\pol)) \leq |S|$.
The extension for combinatorial witnesses follows by definition.
\end{proof}

Our work builds witness sets using the following idea.

\begin{lemma}\label{lem:WitnessForPolygon}
    Let $\pol \sbe \R^2$ be a polygon with set of edges $E$, and let $U$ be a set of two edges of $E$ which are \textbf{not} parallel. Then, the set $E \sm U$ is a witness for $\pol$.
    In particular, if $U$ consists of two consecutive edges, then $E \sm U$ is a combinatorial witness for $\pol$.
\end{lemma}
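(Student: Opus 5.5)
The plan is to use the single vector relation that a polygon imposes on its edge lengths. For a polygon $\pol\subseteq\R^2$ with cyclically ordered vertices $\b v_1,\dots,\b v_k$, \Cref{def:DeformationCone} gives exactly one constraint, namely \Cref{eq:edge_lengths}:
\[
\sum_{i} \lambda_{\b v_i\b v_{i+1}} (\b v_{i+1}-\b v_i) = 0 .
\]
This is an equation in $\R^2$, so it amounts to two scalar linear equations. Let $U=\{e,f\}$ and denote the edge vectors by $\b u_e$ and $\b u_f$. I will move every term with $g\in E\sm U$ to the right-hand side, which gives
\[
\lambda_e \b u_e + \lambda_f \b u_f = -\sum_{g\in E\sm U}\lambda_g \b u_g .
\]
If the values $\lambda_g$ for $g\in E\sm U$ are fixed, the right-hand side is a known vector in $\R^2$.

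The key step is that $e$ and $f$ are not parallel, so $\b u_e$ and $\b u_f$ form a basis of $\R^2$. The $2\times 2$ system above therefore has a unique solution $(\lambda_e,\lambda_f)$. Concretely, it is given by Cramer's rule, which makes it a linear function of the remaining $\lambda_g$. Hence all of $E$ is dependent on $E\sm U$, which means $E\sm U$ is a witness in the sense defined above.

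For the combinatorial statement, I take $U$ to be two consecutive edges, so they share a vertex $\b v$. In any polygon $\pol'$ combinatorially equivalent to $\pol$, the image $\phi(U)$ is again a pair of consecutive edges. Two consecutive edges of a convex polygon cannot be parallel, since otherwise $\b v$ would lie in the relative interior of a segment and would not be a vertex. The first part then applies to $\pol'$ with $\phi(U)$, and so $\phi(E\sm U)$ is a witness for every such $\pol'$.

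I expect no real obstacle here. The only points needing care are checking that non-parallelism gives invertibility of the $2\times2$ system, and justifying that consecutive edges are never parallel, which is a purely combinatorial property preserved by $\phi$.
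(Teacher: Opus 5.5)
Your proposal is correct and follows essentially the same route as the paper. Both arguments isolate the two non-parallel edge vectors in the polygon's single closing relation \Cref{eq:edge_lengths}, use that these vectors form a basis of $\R^2$ to solve uniquely for their two lengths, and settle the combinatorial part by observing that consecutive edges are never parallel, a property preserved under combinatorial equivalence.
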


\begin{proof}
    Suppose $\pol$ has edges $\b e_1, \dots, \b e_k$ cyclically ordered, and suppose that $U = \{ \b e_j, \b e_k\}$ for $j \neq k$.
    Let $\b \lambda = (\lambda_1, \dots, \lambda_k) \in \R^E$ be the edge-length vector corresponding to a deformation $\polQ$ of $\pol$.
    The polygon $\polQ$ has at most $k$ edges and they each correspond to (and are parallel to) an edge of $\pol$:
    The edges of $\polQ$ are $\lambda_{1} \b e_1, \dots, \lambda_{k}\b e_k$ respectively (where $\lambda_{i} = 0$ when $\polQ$ does not have an edge corresponding to $\b e_i$), with $\lambda_i \geq 0$ for all $i$, and by \Cref{eq:edge_lengths},
    $$
        \lambda_{1} \b e_1 + 
        \lambda_{2} \b e_2 + \dots + 
        \lambda_{k} \b e_k = 0 \,.
    $$

    Rearranging the terms, we have that 
    $$
        \lambda_{j} \b e_j + 
        \lambda_{k} \b e_k = 
        - \sum_{i \neq j,k} \lambda_{i} \b e_i \,.
    $$
    By hypothesis, $\b e_j$ and $\b e_k$ are not parallel and so are linearly independent. Thus, given $\bigl\{ \lambda_{i} : i \notin \{j, k\} \bigr\}$, there is a unique choice for $\lambda_{j}$ and $\lambda_{k}$ so that the above equation holds. 
    Therefore, the set $E \sm \{e_j, e_k\}$ is a witness for $\pol$.

    The second claim follows from the fact that consecutive edges of a polygon are never parallel.
\end{proof}

\begin{corollary} \label{cor:infer}
Let $\pol \in \R^2$ be a quadrilateral and $\b e_1, \b e_2$ two adjacent edges of $\pol$, then $\{\b e_1, \b e_2\}$ is a (combinatorial) witness for $\pol$.
\end{corollary}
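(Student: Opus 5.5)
This is an immediate specialization of \Cref{lem:WitnessForPolygon} to the case $k=4$, so the plan is simply to identify the complementary pair of edges. Let the edges of the quadrilateral $\pol$ be $\b e_1, \b e_2, \b e_3, \b e_4$ in cyclic order, where $\b e_1,\b e_2$ are the two given adjacent edges. Take $U \eqdef \{\b e_3, \b e_4\}$. Then $E \sm U = \{\b e_1, \b e_2\}$.

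The only hypothesis to check is that the two edges in $U$ are not parallel. Since $\b e_3$ and $\b e_4$ are consecutive edges of the polygon $\pol$, they share a vertex. They cannot be parallel, for then they would lie on a common line through that vertex and that vertex would not be a vertex of $\pol$. This is exactly the ``in particular'' clause of \Cref{lem:WitnessForPolygon}.

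That clause therefore gives that $E \sm U = \{\b e_1,\b e_2\}$ is a witness, and indeed a combinatorial witness, for $\pol$. The point for the combinatorial version is that the choice of $U$ uses only the cyclic order of the edges. For any $\pol'$ combinatorially equivalent to $\pol$, the images of $\b e_3$ and $\b e_4$ are again two consecutive edges of the quadrilateral $\pol'$, and hence again non-parallel.

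Concretely, \Cref{eq:edge_lengths} for the single $2$-face reads $\lambda_1\b e_1+\lambda_2\b e_2 = -\lambda_3\b e_3-\lambda_4\b e_4$. Since $\b e_3,\b e_4$ form a basis of $\R^2$, this determines $\lambda_3$ and $\lambda_4$ uniquely from $\lambda_1$ and $\lambda_2$. I expect no real obstacle here: the only point needing care is the non-parallelism of the consecutive pair $U$, which is immediate.
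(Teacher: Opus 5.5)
Your proposal is correct and matches the paper's proof, which is exactly the direct application of \Cref{lem:WitnessForPolygon} to a polygon with four edges, with $U$ the complementary pair of consecutive (hence non-parallel) edges. Your additional remarks on non-parallelism and on the explicit $2\times 2$ system only spell out what the lemma's ``in particular'' clause already provides.
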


\begin{proof}
Direct application of \Cref{lem:WitnessForPolygon} to a polygon with four edges.
\end{proof}

In this paper, we will be concerned with the deformation cones of three overlapping categories of polytopes: cubical polytopes, $2$-cubical polytopes, and zonotopes.
A polytope is:
\begin{compactenum}
\item[(a)] \df{cubical} if all of its facets are combinatorially equivalent to (hyper)cubes;

\item[(b)] \df{$k$-cubical} if all its $k$-faces are combinatorially equivalent to (hyper)cubes;

\item[(c)] a \df{zonotope} if it is a Minkowski sum of line segments.
\end{compactenum}

We are interested in \df{$2$-cubical zonotopes} and $2$-cubical polytopes which are combinatorially isomorphic to zonotopes, \ie \df{$2$-cubical combinatorial zonotopes}.
Since $2$-faces of zonotopes are zonotopes, note that if a zonotope is $2$-cubical, then all its $2$-faces are parallelograms: following this line of thought, $2$-cubical zonotopes were named \emph{parallelogrammic zonotopes} in \cite{padrol2026graphimplicitedgedependencies}.
These are the zonotopes such that no three of their defining line segment summands are coplanar.
Note that in three dimensions, the $2$-cubical polytopes are exactly the cubical polytopes, yet this is no longer true in higher dimensions (\eg the prism, in $\R^4$, over the zonotope in \Cref{fig:colored_zonotope} (left) is a non-cubical but $2$-cubical zonotope).
Cubical polytopes will play a role in \Cref{exm:CubicalNotWork} where we show that \Cref{thm:combo_zonotope} does not extend to their setting.

A key observation about deformation cones is that their dimension is not a combinatorial property, \ie two polytopes which are combinatorially equivalent may have deformation cones of different dimensions. One such example can be found in \cite[Example 2.3.7]{padrol2026graphimplicitedgedependencies} (which originally appeared in \cite[Figure 1]{kallay1982indecomposable}), and an even earlier example can be found in \cite[Chapter~3]{Meyer-ThesisMinkowskiAdditionOfConvexSets}.

However, it was shown in \cite{Castillo_2022} that any polytope combinatorially equivalent to a $d$-dimensional cube has a deformation cone of dimension $d$. (The authors moreover prove that the deformation cone of such a combinatorial cube is a simplicial cone.) As both zonotopes and cubical polytopes are natural generalizations of cubes, it is reasonable to explore properties of their deformation cones and to look for an analogous result. 

It was shown in \cite[Example~4.2.7, point~4]{padrol2026graphimplicitedgedependencies} that for a $2$-cubical zonotope $\polZ$ with $m$ (non-parallel) line segment summands, $\dim(\DC(\polZ)) = m$, and that $\DC(\polZ)$ is a simplicial cone.
We wish to understand the dimension of the deformation cones of $2$-cubical combinatorial zonotopes in general.

To work with these combinatorial zonotopes, we require the following additional terminology. 
For a geometric zonotope $\polZ$, its normal fan is equivalent to a \df{hyperplane arrangement}: a finite collection of hyperplanes. 
In this case, it is a collection of $m$ linear hyperplanes whose normal vectors correspond to the line segment summands of $\polZ$, see \cite[Theorem~7.16]{ziegler2012lectures}. 
An example is shown in \Cref{fig:colored_zonotope}. 
It will be useful to define an \df{oriented hyperplane arrangement} as a hyperplane arrangement $\ha$ which has been given an orientation; 
that is, the two open halfspaces of each hyperplane $H \in \ha$ are given a sign and so designated as $H^-$ and $H^+$. 
For an oriented hyperplane arrangement $\ha$, a \df{supertope} of a subset $\c U \sbe \ha$ is the intersection of halfspaces $\c U^+\eqdef \bigcap_{H \in \c U} H^+$ (terminology adopted from \cite{bjorner1999oriented}). If $\c U$ is empty, then $\c U^+$ is the whole ambient space. 
An oriented hyperplane arrangement $\c H$ is \df{acyclic} if there is a base point $\b p$ contained in the positive side of all the halfspaces.
Then, the normal fan of a $2$-cubical zonotope can be expressed as a hyperplane arrangement where no more than two hyperplanes intersect in a $(d-2)$-dimensional subspace, which we will call a \df{$2$-cocubical hyperplane arrangement}.

\begin{figure}[h]
    \centering
        \resizebox{0.7\linewidth}{!}{
            \definecolor{zblue}{HTML}{2F6FEB}
\definecolor{zred}{HTML}{D9442B}
\definecolor{zgreen}{HTML}{5BBF5B}
\definecolor{zpurple}{HTML}{8E2BC4}

\begin{tikzpicture}[line width=1.3pt, hidden/.style={gray!45, dashed, line width=0.8pt},
    back/.style={dashed}, disc/.style={fill opacity=0.15},
    meet/.style={gray, line width=0.8pt}]
  \begin{scope}
    \draw[hidden, zred] (2.600,0.000) -- (2.600,1.580); 
    \draw[hidden, zred] (4.460,0.890) -- (4.460,2.470); 
    \draw[hidden, zgreen] (2.600,1.580) -- (3.240,2.550); 
    \draw[hidden, zpurple] (2.600,1.580) -- (4.460,2.470); 
    \draw[hidden, zblue] (0.000,1.580) -- (2.600,1.580); 
    \draw[hidden, zpurple] (3.240,2.550) -- (5.100,3.440); 
    \draw[hidden, zblue] (0.640,2.550) -- (3.240,2.550); 
    \draw[hidden, zgreen] (4.460,2.470) -- (5.100,3.440); 
    \draw[zblue] (0.000,0.000) -- (2.600,0.000); 
    \draw[zgreen] (0.000,0.000) -- (0.640,0.970); 
    \draw[zpurple] (0.000,0.000) -- (1.860,0.890); 
    \draw[zred] (0.000,0.000) -- (0.000,1.580); 
    \draw[zpurple] (2.600,0.000) -- (4.460,0.890); 
    \draw[zgreen] (4.460,0.890) -- (5.100,1.860); 
    \draw[zred] (5.100,1.860) -- (5.100,3.440); 
    \draw[zblue] (2.500,1.860) -- (5.100,1.860); 
    \draw[zblue] (1.860,0.890) -- (4.460,0.890); 
    \draw[zblue] (2.500,3.440) -- (5.100,3.440); 
    \draw[zpurple] (0.640,0.970) -- (2.500,1.860); 
    \draw[zred] (0.640,0.970) -- (0.640,2.550); 
    \draw[zgreen] (1.860,0.890) -- (2.500,1.860); 
    \draw[zred] (2.500,1.860) -- (2.500,3.440); 
    \draw[zgreen] (0.000,1.580) -- (0.640,2.550); 
    \draw[zpurple] (0.640,2.550) -- (2.500,3.440); 
  \end{scope}

  \begin{scope}[shift={(9.67,1.74)}, scale=2.73]
    \fill[zblue, disc, rotate=90.00] (0,0) ellipse (1 and 0.1320);
    \fill[zred, disc, rotate=180.00] (0,0) ellipse (1 and 0.1950);
    \fill[zgreen, disc, rotate=146.58] (0,0) ellipse (1 and 0.3830);
    \fill[zpurple, disc, rotate=115.57] (0,0) ellipse (1 and 0.3300);
    \draw[meet] (0.1295,0.1934) -- (-0.1295,-0.1934); 
    \draw[meet] (0.1223,0.3756) -- (-0.1223,-0.3756); 
    \draw[meet] (0.1168,0.4662) -- (-0.1168,-0.4662); 
    \draw[meet] (-0.4049,-0.1783) -- (0.4049,0.1783); 
    \draw[meet] (-0.2757,-0.1874) -- (0.2757,0.1874); 
    \draw[meet] (-0.2023,-0.3254) -- (0.2023,0.3254); 
    \draw[zblue, back, rotate=90.00] (-1,0) arc[start angle=180, end angle=360, x radius=1, y radius=0.1320];
    \draw[zred, back, rotate=180.00] (-1,0) arc[start angle=180, end angle=360, x radius=1, y radius=0.1950];
    \draw[zgreen, back, rotate=146.58] (-1,0) arc[start angle=180, end angle=360, x radius=1, y radius=0.3830];
    \draw[zpurple, back, rotate=115.57] (-1,0) arc[start angle=180, end angle=360, x radius=1, y radius=0.3300];
    \draw[zblue, rotate=90.00] (1,0) arc[start angle=0, end angle=180, x radius=1, y radius=0.1320];
    \draw[zred, rotate=180.00] (1,0) arc[start angle=0, end angle=180, x radius=1, y radius=0.1950];
    \draw[zgreen, rotate=146.58] (1,0) arc[start angle=0, end angle=180, x radius=1, y radius=0.3830];
    \draw[zpurple, rotate=115.57] (1,0) arc[start angle=0, end angle=180, x radius=1, y radius=0.3300];
    \draw[black, line width=1.5pt] (0,0) circle (1);
    \fill[black] (0,0) circle (0.6pt); 
  \end{scope}
\end{tikzpicture}
        }

        \caption{(Left) Example of a zonotope in $\R^3$ which is the Minkowski sum of four line segments. (Right) Its normal fan, intersected with the sphere $\S^2$ for visualization.}
    \label{fig:colored_zonotope}
\end{figure} 

In \Cref{sec:result,sec:Beyond}, we will find it more convenient to work with the normal fans of polytopes rather than the polytopes themselves, and we will reuse our earlier terminology: 
Letting $\pol$ be a polytope, and
taking $W$ to be the set of walls of $\Nf(\pol)$, we say that $T \subseteq W$ is \df{dependent on} $S \subseteq W$ if the respective corresponding sets of edges $T'$ and $S'$ are such that $T'$ is dependent on~$S'$. 
Further, we say $S \subseteq W$ is a \df{witness for $\pol$} if all of the walls of $\pol$ are dependent on $S$. It is a \df{combinatorial witness} if it is a witness for any polytope in the same isomorphism class.
We will also overload the terminology slightly for a hyperplane arrangement $\c H$ corresponding to the normal fan of a zonotope, where $W$ is the set of all walls contained in the hyperplanes of $\c H$. 
Then, orienting $\ha$, we say a subset $S \subseteq W$ is a \df{witness} for a supertope $R$ of $\c H$ if all of the walls in the interior of $R$ are dependent on the walls in $S$. If this does not depend on the realization, we say that it is a \df{combinatorial witness}.

Then, \cref{cor:infer} can be restated in the form that will be most useful:

\begin{corollary}\label{cor:inferfan}
    For any four walls $\polw_1, \polw_2, \polw_3, \polw_4$ in the normal fan of a polytope, if their common intersection is not contained in any other wall, and if the walls $\polw_1$ and $\polw_2$ are adjacent, then $\{\polw_1, \polw_2\}$ is a combinatorial witness for $\{\polw_1, \polw_2,\polw_3, \polw_4\}$. 
\end{corollary}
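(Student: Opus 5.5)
The plan is to translate \Cref{cor:infer} through the duality between faces of $\pol$ and cones of $\Nf(\pol)$. The hypothesis is that $\polw_1,\polw_2,\polw_3,\polw_4$ share a common intersection $C$ that lies in no other wall, and $C$ should be a codimension-$2$ cone of $\Nf(\pol)$. I would first check this: two distinct walls $\polw_1,\polw_2$ of a complete fan meet in a cone of dimension at most $d-2$. Since $\polw_1$ and $\polw_2$ are adjacent, meaning they are consecutive walls around a common $(d-2)$-cone, that intersection has dimension exactly $d-2$. Hence $C=\polw_1\cap\polw_2\cap\polw_3\cap\polw_4$ is a $(d-2)$-dimensional cone of $\Nf(\pol)$, namely the normal cone $\Nf_\pol(\polF)$ of some $2$-face $\polF$ of $\pol$.

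Next I would identify the edges of $\polF$. Because the face correspondence reverses inclusion, the edges of $\polF$ correspond exactly to the walls of $\Nf(\pol)$ that contain $\Nf_\pol(\polF)=C$. By hypothesis, these are precisely $\polw_1,\dots,\polw_4$, with no others. So $\polF$ is a polygon with exactly four edges, that is, a quadrilateral, and its edges $e_1,\dots,e_4$ correspond to $\polw_1,\dots,\polw_4$.

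Adjacency of walls around $C$ then has to be matched with adjacency of edges in $\polF$. The walls containing $C$ are cyclically ordered around $C$: in the $2$-dimensional quotient $\R^d/\mathrm{span}(C)$ they become rays, and this fan is the normal fan of $\polF$ within its affine span. Consecutive rays in that cyclic order correspond to edges sharing a vertex, the vertex whose normal cone is the chamber between them. So adjacent walls $\polw_1,\polw_2$ correspond to adjacent edges $e_1,e_2$ of the quadrilateral $\polF$. By \Cref{cor:infer}, $\{e_1,e_2\}$ is a combinatorial witness for $\polF$: the equation \eqref{eq:edge_lengths} for $\polF$ determines $\lambda_{e_3},\lambda_{e_4}$ from $\lambda_{e_1},\lambda_{e_2}$. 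This holds in every combinatorially equivalent realization, because combinatorial isomorphisms preserve $2$-faces, the four-edge property, and the adjacency of edges.

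The step I expect to require the most care is making "adjacent walls" precise, namely showing that adjacency in the fan corresponds to consecutive edges of $\polF$ rather than opposite edges. The cyclic-order argument in the $2$-dimensional quotient is the tool for this. The rest is unwinding definitions.
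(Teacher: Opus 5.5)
Your proposal is correct and follows the paper's route: the paper gives no separate proof and presents the statement as \Cref{cor:infer} read through the inclusion-reversing correspondence between faces of $\pol$ and cones of $\Nf(\pol)$. Your argument unwinds exactly that correspondence, and you read ``adjacent'' correctly as consecutive around the common codimension-$2$ cone (bounding a common chamber), which is how the statement is used in \Cref{lem:MainAlgoYieldsCombinatorialWitness}. One step needs tightening: your ``Hence $C$ is $(d-2)$-dimensional'' does not follow from adjacency alone, since a priori $C$ is only a face of $\polw_1\cap\polw_2$. It is repaired by the hypothesis: $C$ is a cone of the fan, so $C=\Nf_\pol(\polF)$ for a face $\polF$ whose edges are exactly the four given walls, and since every polytope of dimension at least $3$ has at least six edges, $\polF$ must be a quadrilateral.
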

\section{Deformation cones of 2-cubical combinatorial zonotopes} \label{sec:result}

In this section we prove our main result, \Cref{thm:combo_zonotope}, which provides an upper bound for the dimension of deformation cones of $2$-cubical combinatorial zonotopes. In \Cref{ssec:RhombicDodechedron}, we show that this is not an equality in general.

\subsection{Proof of our main theorem}
To prove \cref{thm:combo_zonotope}, we wish to examine the dependencies between the walls of the normal fan of a $2$-cubical zonotope, which, as previously stated, can be expressed as a hyperplane arrangement. In particular, we wish to construct a combinatorial witness set of walls using \cref{alg:supertope_witness} below.

We say that a subset $\c V\subseteq \ha$ of an oriented hyperplane arrangement is a \df{huddled set} if every $H\in \c V$ intersects the interior of the supertope $(\ha \setminus \c V)^+$ of its complement. 

\begin{figure}
    \centering
    \includegraphics[width=0.66\linewidth]{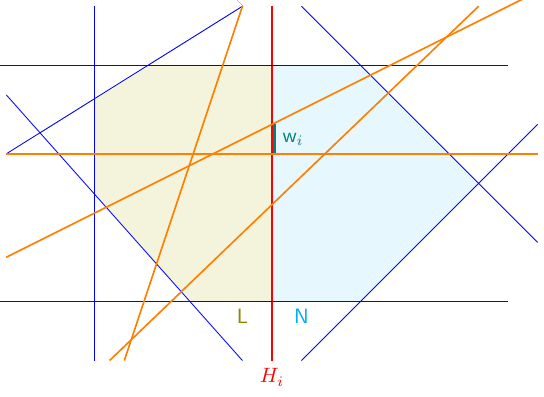}
    \caption{Illustration of the main step of \Cref{alg:supertope_witness} with the notations used in the proof of \Cref{lem:MainAlgoYieldsCombinatorialWitness}.
    In \textcolor{orange}{orange} are the hyperplanes of $\c V_{i-1}$, while in \textcolor{blue}{blue} are the hyperplanes of $\c U_i$.
    The \textcolor{red}{red} hyperplane $H_i$ belongs to $\c V_i$ but not to $\c U_i$.
    We have $\c U_{i-1}^+ = \polL$ and $\c U_i^+ = \polL \cup \polN$.
    }
    \label{fig:Algo}
\end{figure}

\begin{algorithm}[{See \Cref{fig:Algo} for an illustration of the notations}]\label{alg:supertope_witness} \leavevmode

    \textbf{Inputs:}  
    \begin{itemize}
        \item An acyclic oriented hyperplane arrangement $\c H$.
        \item A huddled set $\c V_0 \sbe \c H$ and its complement $\c U_0 \eqdef \c H\sm \c V_0$. 
        \item A set $S_0$ of walls contained in the supertope $\c U_0^+$ of~$\c U_0$. 
    \end{itemize}

    \textbf{Outputs:} 
    \begin{itemize}
        \item Huddled sets $\c V_i\supseteq  \c V_0 $ for $1\leq i\leq |\c U_0|$, with complements $\c U_i \eqdef \c H\sm \c V_i$, together with
         sets $S_i$ of walls of $\c H$ contained in $\c U_i^+$.

    \end{itemize}
    
    \textbf{Steps:} 
    For $1\leq i \leq |\c U_0|$:
    \begin{enumerate}
        \item Choose a wall $\polw_{i}$ which is contained in the boundary of $\c U_{i-1}^+$, and let $H_{i} \in \c U_{i-1}$ be the hyperplane which contains $\polw_{i}$. 
        \item Set $S_{i} \eqdef S_{i-1} \cup \{\polw_{i}\}$, $\c V_{i} \eqdef \c V_{i-1} \cup \{H_{i}\}$ and $\c U_{i} \eqdef \c U_{i-1} \setminus \{H_{i}\}$. 
    \end{enumerate}
\medskip
\end{algorithm}

\begin{lemma}\label{lem:MainAlgoYieldsCombinatorialWitness}
Suppose $\c H$ is an acyclic oriented $2$-cocubical hyperplane arrangement, $\c V_0 \sbe \c H$ is a huddled set with complement $\c U_0 = \c H \sm \c V_0$, and $S_0$ is a combinatorial witness for the walls in the interior of the supertope $\c U_0^+$.  For $1\leq i\leq |\c U_0|$, let $\c V_i$, $\c U_i$ and $S_i$ be the outputs of \cref{alg:supertope_witness}.
Then, $S_i$ is a combinatorial witness set of size~$|S_0|+i$ for the supertope $\c U_i^+$.
\end{lemma}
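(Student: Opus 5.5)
Proceed by induction on $i$, the case $i=0$ being the hypothesis on $S_0$. For the inductive step, assume $S_{i-1}$ is a combinatorial witness of size $|S_0|+i-1$ for $\c U_{i-1}^+$. Set $\polL \eqdef \c U_{i-1}^+$ and $\polN \eqdef \c U_i^+ \cap H_i^-$, the region beyond $H_i$, so that $\c U_i^+ = \polL \cup \polN$ with $\polL\cap\polN\subseteq H_i$ (see \Cref{fig:Algo}). The walls in the interior of $\c U_i^+$ then split into three groups: those in the interior of $\polL$, which depend on $S_{i-1}$ by induction; those contained in $H_i \cap \c U_i^+$, the new separating walls; and those in the interior of $\polN$. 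Since $\polw_i$ is added, $|S_i| = |S_{i-1}|+1$, so the size claim is immediate. It remains to show that every wall in the last two groups depends on $S_{i-1}\cup\{\polw_i\}$, using only \Cref{cor:inferfan}. This makes the argument independent of the realization, and hence combinatorial.

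\textbf{Step 1: walls on $H_i$.} The walls of $H_i$ inside $\c U_i^+$ are the chambers of the induced arrangement $\{H\cap H_i : H \in \c H\setminus\{H_i\}\}$ restricted to $H_i\cap \c U_i^+$. This region is connected, since it is convex. Two such walls $\polw,\polw'$ are adjacent across a $(d-2)$-face $F = H_i\cap H$ for a single other hyperplane $H$, by the $2$-cocubical hypothesis. Around $F$ there are exactly four walls: $\polw,\polw'$ in $H_i$, and two walls $\polu,\polu'$ in $H$, one on each side of $H_i$.

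Now suppose $\polw$ is already determined, and some wall of $H$ at $F$ is determined, say the one $\polu$ lying on the $\polL$ side. Since $\polw$ and $\polu$ are adjacent, \Cref{cor:inferfan} gives that $\{\polw,\polu\}$ determines $\polw'$ and $\polu'$. The wall $\polu$ lies in $\polL$. It is interior to $\polL$ (hence determined by induction) unless $H\in \c V_{i-1}$, i.e.\ unless $H$ bounds $\polL$. This is where I would use that $\c V_{i-1}$ is huddled and that $F$ lies in the interior of $\c U_i^+$ away from the hyperplanes of $\c U_i$.

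The delicate case is when $F$ lies in $H_i\cap H$ with $H\in\c V_{i-1}$, since then the wall $\polu$ of $H$ may not be interior to $\polL$. I would handle it by noting that such an $H$ does not bound $\polL$ at that location, because $\polL$ is cut only by $\c U_{i-1}$. Hence $\polu\subseteq \polL$ is an interior wall of $\polL$. Propagating from $\polw_i$ along the connected dual graph of these walls of $H_i$ then determines all of them, together with the walls of $\polN$ adjacent to $H_i$.

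\textbf{Step 2: the region $\polN$.} Propagate away from $H_i$ through $\polN$ by a sweep, e.g.\ ordering $(d-2)$-faces in $\polN$ by a generic linear functional, or by distance from $H_i$ in the dual graph. At each $(d-2)$-face $F$ of $\polN$, four walls meet, again by $2$-cocubicality. If two adjacent ones (consecutive around $F$) are already determined, \Cref{cor:inferfan} determines the other two. I would show that for the face $F$ minimizing the functional among those not yet handled, the two walls on the side facing $H_i$ are already determined. This is a standard shelling argument: a generic sweep direction exists because the chambers of $\polN$ are linearly ordered away from $H_i$ by the acyclicity/base point.

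\textbf{Main obstacle.} I expect Step 2 to be the real difficulty: guaranteeing that at every $(d-2)$-face in $\polN$ two \emph{adjacent} walls (not two opposite ones) are known, and that the sweep reaches every wall. The plan is to take the sweep functional to be the normal of $H_i$, perturbed generically. For such a functional, the two walls at $F$ on its lower side are adjacent around $F$ (they bound the lowest chamber at $F$) and were processed earlier. Then I will check that the huddled condition makes every $(d-2)$-face of $\polN$ reachable.
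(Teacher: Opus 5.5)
\textbf{There is a genuine gap in Step~2.}

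Your Step~1 is essentially the paper's argument. The ``delicate case'' is in fact the only case. A hyperplane $H$ meeting $H_i$ in a codimension-$2$ face interior to $\c U_i^+$ must lie in $\c V_{i-1}$. No hyperplane of $\c V_{i-1}$ bounds $\polL=\c U_{i-1}^+$. So the wall of $H$ on the $\polL$ side is always interior to $\polL$.

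Step~2 claims that, when sweeping away from $H_i$, the two walls on the side of each codimension-$2$ face facing $H_i$ are already determined. This is false, because walls inside $\polN$ may run into a hyperplane of $\c U_i$ (the boundary of $\polN$) instead of reaching $H_i$. Here is a concrete instance.
\begin{itemize}
\item In $\R^3$ take $H_i=\{z=0\}$, $K=\{x=z\}$, $G=\{y=0\}$ and $G'=\{x=\tfrac z2+y\}$; no three of these share a line.
\item Orient towards a suitable base point in $\{z<0,\ x<z\}$, and let the algorithm pick walls on $G$, then $G'$, then $H_i$. This gives $\c U_i=\{K\}$ and $\polN=\{z>0,\ x<z\}$.
\item In the slice $z=1$, $\polN$ is the half-plane $x<1$ with $H_i$ at infinity. $G$ and $G'$ become the lines $y=0$ and $x=\tfrac12+y$. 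The only codimension-$2$ face interior to $\polN$ is $F=(\tfrac12,0)$.
\item The walls at $F$ known from Step~1 are the rays $\{(t,0):t<\tfrac12\}$ and $\{(\tfrac12+s,s):s<0\}$. The other two walls at $F$ are segments from $F$ to $K$.
\item The height above $H_i$, $1/\sqrt{1+x^2+y^2}$, decreases from $F$ along both segments and increases along both rays.
\end{itemize}
So for your functional, and for any small perturbation of it, the pair ``on the lower side'' is exactly the undetermined pair. The lemma holds in this example only because the two rays happen to be consecutive at $F$.

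There are two further problems with the sweep. For $d\geq 4$ the codimension-$2$ faces are unbounded cones; in an affine slice they are $(d-3)$-dimensional polyhedra, not points. So ordering them by a linear functional is not even well-defined. Also, the chambers of $\polN$ are only partially ordered away from $H_i$, not linearly ordered.

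\textbf{The missing idea is the paper's self-similar recursion, which replaces any sweep.}

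The paper inducts on $|\c V_i|$ over \emph{all} valid inputs $(\ha,\c V_0,S_0)$, not on $i$ within a fixed run. Induction within a fixed run only gives information on $\polL$ and says nothing about $\polN$. The recursion goes as follows.
\begin{itemize}
\item Let $S'$ be the set of walls of $\polN$ adjacent to $H_i\cap\c U_i^+$. All of them are determined by Step~1.
\item By huddledness of $\c V_{i-1}$, every hyperplane cutting the interior of $\polN$ also cuts $\polL$. Hence it crosses $H_i$ inside $\c U_i^+$ and carries a wall of $S'$.
\item Reorient $\ha$ towards a chamber of $\polN$ adjacent to $H_i$. Rerun the algorithm from $\c V_0'=\emptyset$, always choosing walls in $S'$.
\item This reaches $\c V'\subseteq\c V_{i-1}$ with $(\c U')^+=\polN$. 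Since $|\c V'|<|\c V_i|$, the induction hypothesis makes $S'$ a combinatorial witness for $\polN$.
\end{itemize}
This recursion is what supplies a valid order of applications of \Cref{cor:inferfan} inside $\polN$. Your plan to ``check that the huddled condition makes every face reachable'' is where this argument would have to go, and the sweep you propose cannot provide it.
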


\begin{proof}  
We proceed by induction on the size $k$ of $\c V_i$. Observe that if $k = 0$ then $i=0$ and the result trivially holds. 
Now, consider $k\geq 1$, and assume as the induction hypothesis that the statement holds for any $S_j'$ and $\c V_j'$ with $|\c V_j'| < k$, when they are obtained with \Cref{alg:supertope_witness} from any valid inputs $\ha'$, $\c V_0'$ and $S_0'$.

If $i=0$ then the statement is trivial. Let $i\geq 1$, and let $\polw_i$ be the wall chosen in Step (1) of the $i^{\text{th}}$ iteration. We will prove that all the walls within $\c U_{i}^+$ are dependent on~$S_i$.
Define $\polN \eqdef H_i^- \cap \c U_{i}^+$ (for `New') and $\polL \eqdef H_i^+ \cap \c U_{i}^+=\c U_{i-1}^+$ (for `oLd'), see \Cref{fig:Algo} for an illustration of the notations. 
Note that the interior walls of $\c U_{i}^+$ are either on $H_i$, or inside $\polL$ or $\polN$.
By hypothesis, $S_{i-1}$ is a combinatorial witness set for 
$\polL$, so every wall in the interior of $\polL$ is dependent on $S_{i-1}\subseteq S_i$.

We now show that every wall of $H_i$ inside $\c U_{i}^+$ is also dependent on~$S_i$, by repeated applications of \Cref{cor:inferfan}. 
Indeed, if $\polw_i$ is the only wall in $H_i \cap \c U_{i}^+$, then we are done because $\polw_i\in S_i=S_{i-1}\cup\{\polw_i\}$. 
If there are other walls in $H_i \cap \c U_{i}^+$, then at least one of them is adjacent to $\polw_i$; call this $\polw'_i$. These two walls are separated by a hyperplane $G$ that must belong to $\c V_i$ since it intersects the interior of $\c U_i^+$ (and is not $H_i$).
The intersection of $\polw_i$ and $\polw_i'$ is thus contained in four walls: $\polw_i$, $\polw_i'$, a wall $\polw''_i\subseteq G$ inside $\polL$, and 
a wall $\polw'''_i\subseteq G$ inside $\polN$.
There are no other walls containing this intersection because the arrangement is $2$-cocubical.
Note that we have already seen that both $\polw_i\in S_i$ and $\polw''_i\subseteq \polL$ are dependent on $S_{i}\supseteq S_{i-1}$. 
Therefore, applying \Cref{cor:inferfan} to $\polw_i$ and $\polw_i''$, we also have that $\polw_i'$ is dependent on $S_i$. 
If there are still remaining walls of $H_i \cap \c U_{i}^+$, at least one of them is adjacent to either $\polw_i$ or $\polw'_i$, so we may repeat this process until we have fixed the lengths of all walls within $H_i \cap \c U_{i}^+$. 
    
In fact, in the same application of \Cref{cor:inferfan}, not only $\polw''_i$ but also $\polw'''_i$ is dependent on~$S_i$.
This holds for every wall adjacent to $H_i\cap\c U_{i}^+$ that is contained in $\polN$, all of which are  dependent on~$S_i$.
We denote by $S'$  this set of walls in $\polN$. We will now prove that $S'$ is a combinatorial witness set for $\polN$.
Let $\c V'$ be the set of hyperplanes containing the elements of $S'$, and let $\c U' = \ha \sm \c V'$. 
Observe that $|S'| = |\c V'| \leq |\c V_i| = k$.
We aim to show that $S', \c V'$ and $\c U'$ are outputs of \cref{alg:supertope_witness}: it remains to choose valid inputs.

Let $\ha'$ be the reorientation of $\ha$ obtained by orienting all hyperplanes towards a base point $\b p'$ which lies in a chamber of $\polN$ that contains a wall of $H_i$. 
Such a point always exists because $H_i$ bounds $\polN$.
Then, we begin by setting $S_0' = \es, \c V_0' = \es$, and $\c U_0 '= \ha'$. We will apply \cref{alg:supertope_witness} restricting our choice of walls $\polw_i$ to those in $S'$.

If $\c V_{j-1}'\neq \c V'$, then there exists a hyperplane $G_j$ on the boundary of $(\c U_{j-1}')^{+}$ that intersects $H_i$. Therefore, there is a wall $\polw_j'$ which is in $S'$ and lies within $G_j$.
We therefore choose $\polw_j'$ in Step~(1), and \cref{alg:supertope_witness} will output $S_{j}' = S_{j-1}' \cup \{\polw_j'\}$, $\c V_{j}' = \c V_{j-1}' \cup \{G_j\}$, and $\c U_{j}' = \c U_{j-1}' \sm \{G_j\}$.
Note that when applying this algorithm iteratively, we have $S_j' \subseteq S'$ for all $j$. So, we will eventually construct $S_m' = S'$ with $\c V_m' = \c V$.
In particular, $S_m$ is the result of $|S'|$ applications of our algorithm. As the input to the $|S'|^{\text{th}}$ application has size $|S'| - 1 < k$, our induction hypothesis implies that $S_m' = S'$ is a combinatorial witness for the supertope $(\c U_m')^+=(\c U')^+$.

In fact, $(\c U')^+=\polN$. Indeed, notice that the hyperplanes of $\c U_i$ do not change orientation when passing from $\ha$ to $\ha'$, because $\b p\in \polL\subseteq\c U_i^+$ and $\b p'\in\polN\subseteq \c U_i ^+$, and all these hyperplanes belong to $\c U'$ by construction.
Also, $H_i$ belongs to $\c U'$, and its orientation in $\ha '$ coincides with the one used to define $\polN$. This shows that $(\c U')^+\subseteq \polN$. This inclusion would be strict if some of the hyperplanes of $\c V_{i-1}$ that cut the interior of $\polN$ belonged to $\c U'$. However, such a hyperplane would also cut $\polL$ (because $\c V_{i-1}$ is huddled by induction), and therefore would also cut $H_i$ and belong to $\c V'$ (not to~$\c U'$) by construction.

Therefore, $S'$, the set of walls in $\polN$ which are adjacent to $H_i$, is indeed a combinatorial witness set for $\polN$.
Hence, since we previously showed that the walls of $S'$ are dependent on $S_i$, we may conclude that all the walls in the interior of $\polN$ are also dependent on~$S_i$.
Putting it all together, we have that the set $S_i$ is a combinatorial witness for all of $\c U_i^+$, as desired. Since all the applications of \cref{cor:inferfan} involve combinatorial witnesses, $S_i$ is indeed a combinatorial witness for $\c U_i^+$.
\end{proof}

We now have the tools we need to prove \Cref{thm:combo_zonotope}.

\begin{theorem}\label{thm:Aproof}
Let $\polZ$ be a $2$-cubical zonotope which is the Minkowski sum of $m$ line segments. 
Let~$\pol$ be a polytope combinatorially equivalent to $\polZ$. Then, $\dim(\DC(\pol)) \leq m$.
\end{theorem}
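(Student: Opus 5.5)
The plan is to apply \Cref{alg:supertope_witness} with the trivial initial input and then invoke \Cref{lemma:main_idea}. Since $\pol$ is combinatorially equivalent to $\polZ$, a combinatorial witness for $\polZ$ of size $m$ transfers to $\pol$. So it suffices to produce a combinatorial witness of size $m$ for the normal fan of $\polZ$.

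That normal fan is a central $2$-cocubical hyperplane arrangement $\ha$ with $m$ hyperplanes. We do not need it to be essential, since lineality only affects dimensions uniformly.

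\textbf{Setting up the inputs.} Pick a base point $\b p$ in the interior of some chamber, and orient every $H\in\ha$ so that $\b p\in H^+$. This makes $\ha$ acyclic. Take $\c V_0=\es$, so $\c U_0=\ha$. The empty set is vacuously huddled. The supertope $\c U_0^+$ is exactly the chamber containing $\b p$, which has no walls in its interior, so $S_0=\es$ is vacuously a combinatorial witness for it.

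\textbf{Running the algorithm.} We run \Cref{alg:supertope_witness} for $m=|\c U_0|$ steps. At each step we must check that Step~(1) can be performed: $\c U_{i-1}^+$ must have a wall on its boundary lying in a hyperplane of $\c U_{i-1}$. This holds whenever $\c U_{i-1}\neq\es$, because $\c U_{i-1}^+$ is then a proper polyhedral cone cut out by the hyperplanes of $\c U_{i-1}$. One of its facet-defining hyperplanes $H$ has a facet of the supertope contained in $H$. That facet is a union of walls of the arrangement, so any of those walls can be chosen as $\polw_i$.

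\textbf{Conclusion.} By \Cref{lem:MainAlgoYieldsCombinatorialWitness}, $S_m$ is a combinatorial witness of size $m$ for $\c U_m^+$. Since $\c U_m=\es$, this supertope is the whole ambient space. So every wall of the fan, that is, every edge of $\polZ$, depends on $S_m$. Because the witness is combinatorial, $\phi(S_m)$ is a witness for $\pol$, and \Cref{lemma:main_idea} gives $\dim(\DC(\pol))\le m$.

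\textbf{Main obstacle.} The step needing the most care is verifying the hypotheses at each iteration. The lemma asserts that each $\c V_i$ stays huddled, and we rely on that as stated. Separately, the choice in Step~(1) must select a genuine facet of $\c U_{i-1}^+$, not merely a hyperplane of $\c U_{i-1}$ that touches it in lower dimension. I would handle this by choosing $H_i$ among the irredundant facet-defining hyperplanes of $\c U_{i-1}^+$, which exist as long as $\c U_{i-1}\neq\es$.
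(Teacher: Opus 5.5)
Your proposal is correct and follows the paper's proof essentially verbatim. Like the paper, you orient the $2$-cocubical arrangement toward a base point in a chamber, run \Cref{alg:supertope_witness} from $\c V_0=S_0=\es$, invoke \Cref{lem:MainAlgoYieldsCombinatorialWitness} to obtain a combinatorial witness $S_m$ of size $m$ for the whole fan, and conclude with \Cref{lemma:main_idea}. Your extra checks are details the paper leaves implicit, and they are sound: that $\c U_0^+$ is a single chamber, so $S_0=\es$ is vacuously a witness; and that Step~(1) can always be executed by taking a wall in a facet of $\c U_{i-1}^+$ cut out by a hyperplane of $\c U_{i-1}$.
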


\begin{proof}
Let $\pol$ be a polytope combinatorially isomorphic to a $2$-cubical zonotope $\polZ$ with $m$ generators, \ie $\Nf(\polZ)$ is induced by a $2$-cocubical arrangement of $m$ hyperplanes.

Choose a base point $\b p$ in a chamber of $\c N (\polZ)$ and let $\ha$ be the oriented $2$-cocubical hyperplane arrangement corresponding to the normal fan, oriented towards $\b p$. Then, set $\c U_0 = \ha$, $\c V_0 = \es$, and $S_0 = \es$. We apply \Cref{alg:supertope_witness} to these inputs. Then, when $\c U_m = \es$ and $\c V_m = \c H$, the output $S_m$ is a combinatorial witness set for (the normal fan of) $\polZ$ of size~$m$ by \Cref{lem:MainAlgoYieldsCombinatorialWitness}.
Since $\pol \cong \polZ$ and $S_m$ is a combinatorial witness, \Cref{lemma:main_idea} implies that $\dim(\DC(\pol)) \leq |S_m| = m$.
\end{proof}

\subsection{A combinatorial rhombic dodecahedron with a 3-dimensional deformation cone with 4 rays}\label{ssec:RhombicDodechedron}
The following example shows that the actual dimension of $\DC(\pol)$ for $\pol$ a $2$-cubical combinatorial zonotope can be strictly smaller than the number of generators of the associated $2$-cubical zonotope.

\begin{example}\label{exm:DCPwithSmallerDim}
A rhombic dodecahedron, see \Cref{fig:colored_zonotope} (left), is a zonotope formed as the Minkowski sum of four line segments in $\R^3$, no three of which are coplanar.
According to \cite[Example~4.2.7, point 4]{padrol2026graphimplicitedgedependencies}, since it is a $2$-cubical zonotope generated by $4$ segments, its deformation cone is a simplicial cone of dimension $4$. 
Additionally, by \Cref{thm:combo_zonotope}, all combinatorial rhombic dodecahedra have a deformation cone of dimension at most $4$.

However, some combinatorial rhombic dodecahedra do not attain this upper bound, as is the case for the polytope $\RhombicDodeca$ in \Cref{fig:combo_rd}: there, a quick computational experiment shows that $\dim(\DC(\RhombicDodeca)) = 3$ and that $\DC(\RhombicDodeca)$ is a cone on $4$ rays.
By exchanging some parallelogrammic faces for skew quadrilaterals, we have removed some symmetries and have decreased the dimension of the associated deformation cone. 

The vertices of $\RhombicDodeca$ are as follows (we obtained them by trial and error, not relying on an AI-powered tool):
$$\begin{array}{ccccccc}\vspace{0.25cm}
(-1, -1, -1)  &  (-1, 1, -1) & (-2, 0, 0) & (0, -2, 0) & (0, 0, -2) & (\tfrac65, -\tfrac45, \tfrac85) & (\tfrac12, \tfrac32, 3) \\ 
(1, 1, -1) & (1, -1, -1) & (2, 0, 0) & (0, 2, 0) & (-1, 1, 2) & (0, 1, 4) & (-\tfrac65, -\tfrac45, \tfrac85)
\end{array}$$
\end{example}

\begin{figure}
    \centering
        \resizebox{0.25\linewidth}{!}{
            \begin{tikzpicture}%
	[x={(-0.161329cm, 0.621664cm)},
	y={(0.986901cm, 0.101660cm)},
	z={(-0.000047cm, 0.776659cm)},
	scale=1.000000,
	back/.style={dotted, thin},
	edge/.style={color=blue!95!black, thick},
	facet/.style={fill=blue!25,fill opacity=0.6},
	vertex/.style={inner sep=1pt,circle,fill=black,thick}]
%
%
\coordinate (-1.20000, -0.80000, 1.60000) at (-1.20000, -0.80000, 1.60000);
\coordinate (-2.00000, 0.00000, 0.00000) at (-2.00000, 0.00000, 0.00000);
\coordinate (-1.00000, -1.00000, -1.00000) at (-1.00000, -1.00000, -1.00000);
\coordinate (-1.00000, 1.00000, -1.00000) at (-1.00000, 1.00000, -1.00000);
\coordinate (-1.00000, 1.00000, 2.00000) at (-1.00000, 1.00000, 2.00000);
\coordinate (0.00000, -2.00000, 0.00000) at (0.00000, -2.00000, 0.00000);
\coordinate (0.00000, 0.00000, -2.00000) at (0.00000, 0.00000, -2.00000);
\coordinate (0.00000, 1.00000, 4.00000) at (0.00000, 1.00000, 4.00000);
\coordinate (0.00000, 2.00000, 0.00000) at (0.00000, 2.00000, 0.00000);
\coordinate (1.00000, -1.00000, -1.00000) at (1.00000, -1.00000, -1.00000);
\coordinate (1.00000, 1.00000, -1.00000) at (1.00000, 1.00000, -1.00000);
\coordinate (0.50000, 1.50000, 3.00000) at (0.50000, 1.50000, 3.00000);
\coordinate (2.00000, 0.00000, 0.00000) at (2.00000, 0.00000, 0.00000);
\coordinate (1.20000, -0.80000, 1.60000) at (1.20000, -0.80000, 1.60000);
\draw[edge,back] (-1.20000, -0.80000, 1.60000) -- (-2.00000, 0.00000, 0.00000);
\draw[edge,back] (-1.20000, -0.80000, 1.60000) -- (0.00000, -2.00000, 0.00000);
\draw[edge,back] (-1.20000, -0.80000, 1.60000) -- (0.00000, 1.00000, 4.00000);
\draw[edge,back] (-2.00000, 0.00000, 0.00000) -- (-1.00000, -1.00000, -1.00000);
\draw[edge,back] (-2.00000, 0.00000, 0.00000) -- (-1.00000, 1.00000, -1.00000);
\draw[edge,back] (-2.00000, 0.00000, 0.00000) -- (-1.00000, 1.00000, 2.00000);
\draw[edge,back] (-1.00000, 1.00000, 2.00000) -- (0.00000, 1.00000, 4.00000);
\draw[edge,back] (-1.00000, 1.00000, 2.00000) -- (0.00000, 2.00000, 0.00000);
\node[vertex] at (-1.20000, -0.80000, 1.60000)     {};
\node[vertex] at (-2.00000, 0.00000, 0.00000)     {};
\node[vertex] at (-1.00000, 1.00000, 2.00000)     {};
\fill[facet] (1.20000, -0.80000, 1.60000) -- (0.00000, 1.00000, 4.00000) -- (0.50000, 1.50000, 3.00000) -- (2.00000, 0.00000, 0.00000) -- cycle {};
\fill[facet] (1.20000, -0.80000, 1.60000) -- (0.00000, -2.00000, 0.00000) -- (1.00000, -1.00000, -1.00000) -- (2.00000, 0.00000, 0.00000) -- cycle {};
\fill[facet] (2.00000, 0.00000, 0.00000) -- (1.00000, 1.00000, -1.00000) -- (0.00000, 2.00000, 0.00000) -- (0.50000, 1.50000, 3.00000) -- cycle {};
\fill[facet] (1.00000, -1.00000, -1.00000) -- (0.00000, -2.00000, 0.00000) -- (-1.00000, -1.00000, -1.00000) -- (0.00000, 0.00000, -2.00000) -- cycle {};
\fill[facet] (1.00000, 1.00000, -1.00000) -- (0.00000, 0.00000, -2.00000) -- (-1.00000, 1.00000, -1.00000) -- (0.00000, 2.00000, 0.00000) -- cycle {};
\fill[facet] (2.00000, 0.00000, 0.00000) -- (1.00000, -1.00000, -1.00000) -- (0.00000, 0.00000, -2.00000) -- (1.00000, 1.00000, -1.00000) -- cycle {};
\draw[edge] (-1.00000, -1.00000, -1.00000) -- (0.00000, -2.00000, 0.00000);
\draw[edge] (-1.00000, -1.00000, -1.00000) -- (0.00000, 0.00000, -2.00000);
\draw[edge] (-1.00000, 1.00000, -1.00000) -- (0.00000, 0.00000, -2.00000);
\draw[edge] (-1.00000, 1.00000, -1.00000) -- (0.00000, 2.00000, 0.00000);
\draw[edge] (0.00000, -2.00000, 0.00000) -- (1.00000, -1.00000, -1.00000);
\draw[edge] (0.00000, -2.00000, 0.00000) -- (1.20000, -0.80000, 1.60000);
\draw[edge] (0.00000, 0.00000, -2.00000) -- (1.00000, -1.00000, -1.00000);
\draw[edge] (0.00000, 0.00000, -2.00000) -- (1.00000, 1.00000, -1.00000);
\draw[edge] (0.00000, 1.00000, 4.00000) -- (0.50000, 1.50000, 3.00000);
\draw[edge] (0.00000, 1.00000, 4.00000) -- (1.20000, -0.80000, 1.60000);
\draw[edge] (0.00000, 2.00000, 0.00000) -- (1.00000, 1.00000, -1.00000);
\draw[edge] (0.00000, 2.00000, 0.00000) -- (0.50000, 1.50000, 3.00000);
\draw[edge] (1.00000, -1.00000, -1.00000) -- (2.00000, 0.00000, 0.00000);
\draw[edge] (1.00000, 1.00000, -1.00000) -- (2.00000, 0.00000, 0.00000);
\draw[edge] (0.50000, 1.50000, 3.00000) -- (2.00000, 0.00000, 0.00000);
\draw[edge] (2.00000, 0.00000, 0.00000) -- (1.20000, -0.80000, 1.60000);
\node[vertex] at (-1.00000, -1.00000, -1.00000)     {};
\node[vertex] at (-1.00000, 1.00000, -1.00000)     {};
\node[vertex] at (0.00000, -2.00000, 0.00000)     {};
\node[vertex] at (0.00000, 0.00000, -2.00000)     {};
\node[vertex] at (0.00000, 1.00000, 4.00000)     {};
\node[vertex] at (0.00000, 2.00000, 0.00000)     {};
\node[vertex] at (1.00000, -1.00000, -1.00000)     {};
\node[vertex] at (1.00000, 1.00000, -1.00000)     {};
\node[vertex] at (0.50000, 1.50000, 3.00000)     {};
\node[vertex] at (2.00000, 0.00000, 0.00000)     {};
\node[vertex] at (1.20000, -0.80000, 1.60000)     {};
\end{tikzpicture}
        }
    \includegraphics[width=0.7\linewidth]{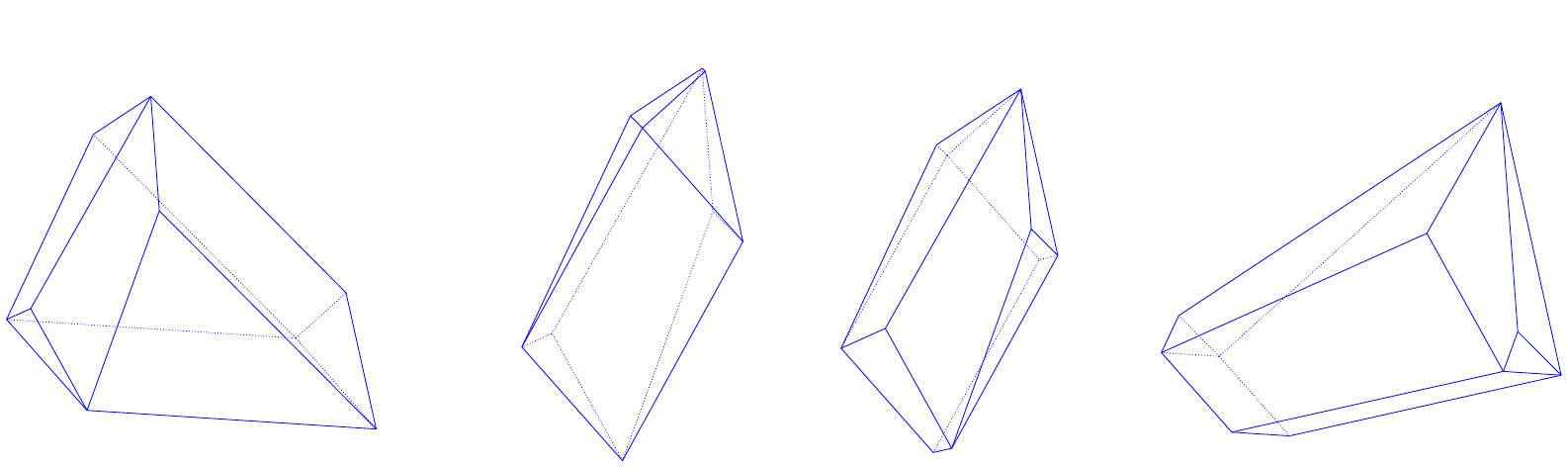}
    \caption{(Left) The combinatorial rhombic dodecahedron $\RhombicDodeca$ whose deformation cone has dimension $3$ and has $4$ rays.
    (Right) The $4$ indecomposable deformations of $\RhombicDodeca$.
    }
    \label{fig:combo_rd}
\end{figure}

The combinatorial rhombic dodecahedron $\RhombicDodeca$ is special for the following three reasons:
\begin{compactenum}
\item $\dim(\DC(\RhombicDodeca)) = 3 < 4$, so it proves that the upper bound given by \Cref{thm:combo_zonotope} is not always attained since the (zonotopal) rhombic dodecahedron has $4$ generators;

\item the number of rays of $\DC(\RhombicDodeca)$ is $4$, so $\DC(\RhombicDodeca)$ is not a simplicial cone, unlike the deformation cones of $2$-cubical zonotopes (see \cite[Example 4.2.7, point 4]{padrol2026graphimplicitedgedependencies});

\item $\RhombicDodeca$ is a cubical $3$-dimensional polytope with a $3$-dimensional deformation cone, which is the lowest possible dimension, as we are going to show in \Cref{cor:smil_dc2_two_triangles}.\label{item:SmilanskyBoundAchieving}
\end{compactenum}

\bigskip

We now discuss the above \eqref{item:SmilanskyBoundAchieving}.
Indeed, via an application of \cite[Theorem~4.3]{smilansky1987decomposability} combined with Euler's formula, we can show that any $3$-polytope without triangular facets cannot have a deformation cone of dimension less than~$3$.
(Note that a polytope without triangular faces can still have 3 vertices pairwise joined by edges.)

\begin{lemma}\label{lem:smil_app}
If $\pol$ is a $3$-dimensional polytope with $t$ triangular facets,
then $\dim(\DC(\pol)) \geq 3 - \frac{t}{2}$.
\end{lemma}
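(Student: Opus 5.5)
The plan is to combine a lower bound on $\dim(\DC(\pol))$ in terms of the numbers of vertices and facets, due to Smilansky, with Euler's formula and a count of edge–facet incidences. I recall \cite[Theorem~4.3]{smilansky1987decomposability} as saying that a $3$-polytope with more vertices than facets is decomposable, or more quantitatively that
\[
\dim(\DC(\pol)) \geq f_0 - f_2 + 1 .
\]
Here $f_0, f_1, f_2$ are the numbers of vertices, edges and facets of $\pol$. The dimension is that of the edge deformation cone as in \Cref{rmk:OnDimension}: translations are modded out and dilations count for one dimension.

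As a sanity check of this form, I would test it on small examples. The cube gives $8-6+1=3$, the triangular prism gives $6-5+1=2$, and the tetrahedron gives $4-4+1=1$, all of which are the correct values.

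The first step is to state Smilansky's bound precisely in this form. This requires checking that his "space of summands" or "dimension of the decomposition space" matches our lineality-free $\dim(\DC(\pol))$, possibly up to the $+1$ from dilations. I expect this translation between conventions to be the main obstacle. If Smilansky only gives the qualitative statement "$f_0 > f_2$ implies decomposable", I would instead extract the quantitative version from his proof. That proof counts the linear conditions imposed on the edge lengths, one vector equation of rank $2$ per facet, against the $f_1$ edge variables, together with a rank argument showing that the facet conditions have a dependency. This gives $\dim \geq f_1 - 2f_2 + 3 = f_0 - f_2 + 1$ after using Euler.

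The second step is purely combinatorial. Each edge lies in exactly two facets, and each non-triangular facet has at least $4$ edges. Double counting edge–facet incidences therefore gives
\[
2f_1 \geq 3t + 4(f_2 - t) = 4f_2 - t .
\]
Euler's formula $f_0 - f_1 + f_2 = 2$ then yields
\[
f_0 = 2 + f_1 - f_2 \geq 2 + f_2 - \tfrac{t}{2}.
\]
Substituting into Smilansky's bound gives $\dim(\DC(\pol)) \geq f_0 - f_2 + 1 \geq 3 - \tfrac{t}{2}$, which is the claim.
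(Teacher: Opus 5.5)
Your proposal is correct and is essentially the paper's proof: the paper invokes \cite[Theorem~4.3]{smilansky1987decomposability} in the form $\dim(\DC(\pol)) \geq f_2 - 3 - \sum_{v}(\deg(v)-3)$, which Euler's formula turns into exactly your bound $f_0 - f_2 + 1$, and then concludes with the same incidence count $2f_1 \geq 4f_2 - t$ and Euler's formula. Your fallback edge-length count is also sound, provided you make two points explicit: the rank deficiency is $3$, because the sum of all facet equations (with facets oriented consistently) vanishes identically as a vector in $\R^3$, each edge being traversed once in each direction; and $\DC(\pol)$ has the same dimension as its linear span, because it contains the all-ones vector, which is interior to the nonnegative orthant.
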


\begin{proof} 
Let $F$, $E$, and $V$ be the sets of facets, edges, and vertices of $\pol$, respectively. We start with a direct application of \cite[Theorem~4.3]{smilansky1987decomposability} to obtain the following.
\begin{align*}
\dim(\DC(\pol))
&\geq |F| - 3 - \sum_{v \in V} \left( \deg(v) - 3 \right)
= |F| - 3 - \left( \sum_{v \in V} \deg(v)\right) + 3 |V| \\
&\geq |F| - 2|E| + 3|V| - 3 
= |V| - |F| + 1,
\end{align*}
where the last line comes from Euler's formula.

Let $s_i$ be the number of edges contained in facet $F_i$.
By double counting: $\sum_i s_i = 2|E|$.
Since $t$ facets have only $3$ edges and the remaining $|F|-t$ facets have at least $4$, we obtain
$
2|E| \geq 3t + 4(|F|-t) = 4|F| - t
$.
Thus, $|E| \geq 2 |F| - \frac{t}{2}$.
Simplifying with Euler's formula: $|V| - |F| + 1 \geq 3 - \frac{t}{2}$.
Substituting into the above expression:
$\dim(\DC(\pol)) \geq 3 - \frac{t}{2}$.
\end{proof}

A well-known result of Smilansky \cite[Corollary~6.8]{smilansky1987decomposability} states that any indecomposable $3$-polytope must have at least $4$ triangular facets, and Smilansky conjectured that there does not exist an indecomposable polytope without triangular $2$-faces in any dimension.
This conjecture is still open, although we do not believe it to be true.
Our lemma above shows that, in fact, there is also a restriction on the number of triangular facets in $3$-dimensional polytopes with deformation cones of higher dimensions.
We now proceed with proving \Cref{thmB}.

\begin{corollary}\label{cor:smil_dc2_two_triangles}
If $\pol$ is a $3$-dimensional polytope with $\dim(\DC(\pol)) = 2$, then $\pol$ must have at least two triangular facets.
In particular, if $\pol$ is a cubical $3$-dimensional polytope, then $\dim(\DC(\pol)) \geq 3$.

Moreover, if $\pol$ is a $3$-dimensional polytope with no triangular facets and $\dim(\DC(\pol)) = 3$, then~$\pol$ must be a cubical polytope.
\end{corollary}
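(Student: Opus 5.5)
The plan is to derive all three claims from \Cref{lem:smil_app}, together with a slightly sharper bookkeeping of the intermediate inequalities in its proof.

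For the first claim, assume $\dim(\DC(\pol)) = 2$. \Cref{lem:smil_app} gives $2 \geq 3 - \frac{t}{2}$, so $t \geq 2$. For the second claim, note that a cubical $3$-polytope has only quadrilateral facets, so $t = 0$. The lemma then gives $\dim(\DC(\pol)) \geq 3$ directly.

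For the \emph{moreover} part, assume $t = 0$ and $\dim(\DC(\pol)) = 3$. I would revisit the chain of inequalities in the proof of \Cref{lem:smil_app} and keep track of the slack. The first inequality, coming from Smilansky's bound, gives $\dim(\DC(\pol)) \geq |V| - |F| + 1$. By Euler's formula $|V| - |E| + |F| = 2$, this quantity equals $|E| - 2|F| + 3$. Next, write $2|E| = \sum_i s_i = 4|F| + \sum_i (s_i - 4)$, where every term $s_i - 4 \geq 0$ because no facet is a triangle. This yields
\[
\dim(\DC(\pol)) \;\geq\; 3 + \tfrac12 \sum_{i} (s_i - 4).
\]
Since $\dim(\DC(\pol)) = 3$, the sum $\sum_i (s_i-4)$ must vanish. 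Each term is nonnegative, so every $s_i = 4$. Hence every facet is a quadrilateral, which for a $3$-polytope is exactly the statement that $\pol$ is cubical.

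No step seems to be a real obstacle. The only care needed is to state the intermediate inequality $\dim(\DC(\pol)) \geq |E| - 2|F| + 3$ explicitly, rather than only the final bound $3 - \frac{t}{2}$, so that equality forces all facets to have exactly four edges.
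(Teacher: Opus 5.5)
Your proposal is correct and follows the paper's proof essentially verbatim: you read the first two claims off \Cref{lem:smil_app}, and for the moreover part you reuse the intermediate bound $\dim(\DC(\pol)) \geq |V|-|F|+1$ together with Euler's formula and double counting of facet edges. The only cosmetic difference is that you track the slack as $\sum_i (s_i-4)$, whereas the paper uses the count $r$ of facets with at least five edges to get $\dim(\DC(\pol)) \geq 3 + \frac{r}{2}$; this is slightly weaker than your bound but equally sufficient.
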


\begin{proof}
The first statement is a direct application of \Cref{lem:smil_app}.

The second statement follows from another application of Euler's formula.
Let $\pol$ be a $3$-dimensional polytope with no triangular facets and with $\dim(\DC(\pol)) = 3$.
For $\pol$, let $F$ be its set of facets, $E$ its set of edges, and $V$  its set of vertices.
As we showed in the proof of \Cref{lem:smil_app}, 
$\dim(\DC(\pol)) \geq |V| - |F| + 1$.
Again, let $s_i$ be the number of edges contained in facet $F_i$. Then, letting $r$ be the number of facets of $\pol$ which contain at least 5 edges, we get
$2|E| = \sum_i s_i \geq 4|F| + r$.
Substituting with Euler's formula and simplifying, we get $|V| - |F| + 1 \geq 3 + \frac{1}{2} r$, so
$\dim(\DC(\pol)) \geq 3 + \frac{1}{2} r$.
Since $\dim(\DC(\pol)) = 3$, it must be that $r = 0$.
Thus, all facets of $\pol$ are quadrilaterals, \ie $\pol$ is cubical.
\end{proof}

\section{Beyond 2-cubical combinatorial zonotopes}\label{sec:Beyond}

Throughout \Cref{sec:result} when discussing hyperplane arrangements, we use only the facts that hyperplanes bisect the ambient space and that they are $2$-cocubical. In particular, we do not rely upon any geometric properties of the hyperplanes. Therefore, our results are not specific to hyperplane arrangements, but rather can be extended to the realm of pseudohyperplane arrangements, \ie oriented matroids.
We will extend our result to any $2$-cubical polytope whose normal fan is combinatorially isomorphic to a pseudohyperplane arrangement. This is a much larger family than combinatorial zonotopes.
Indeed, (graphs of pseudocircle arrangements induced by) simple pseudoline arrangements of rank~$3$ are $2$-cocubical, planar and $3$-connected~\cite{CordovilFukuda1993}, and thus can be realized by a convex $3$-polytope whose dual is $2$-cubical. Most pseudoline arrangements are non-stretchable \cite[Section~8.3]{bjorner1999oriented}, and thus do not come from combinatorial zonotopes.
An explicit example is the arrangement on the front cover of \cite{bjorner1999oriented}, which is a $2$-cocubical oriented matroid that is combinatorially equivalent to a $3$-dimensional polytope yet not to a zonotope.

In \Cref{ssec:OrientedMatroidLikePol}, we introduce the basic notation of oriented matroids and rephrase our results in this language.
In \Cref{ssec:CubicalNotWork}, we show that our main theorem cannot be extended further to all $2$-cubical polytopes with the number of zones as an upper bound, by discussing an example of a cubical $3$-dimensional polytope with a single zone and yet a $3$-dimensional deformation cone.

\subsection{A generalization to oriented-matroid-like polytopes}\label{ssec:OrientedMatroidLikePol}

There are several equivalent formulations for oriented matroids; we will use the covector formulation as presented in \cite{bjorner1999oriented}.

On a ground set $E$, a \df{covector} (\aka \df{sign vector}) is an element of $\{+, -, 0\}^E$.
We denote by $\b 0$ the element $(0, \dots, 0)$.
Also, given a covector $X$, we define the \df{opposite} covector $-X$ such that every coordinate is negated; that is, $(-X)_e = -(X_e)$ for all $e\in E$.
The \df{separation set} of $X$ and $Y$ is $S(X,Y) = \{e \in E : X_e = -Y_e \neq 0 \}$.
The \df{composition} of two covectors $X$ and $Y$ is denoted $X \circ Y$ with, for $e\in E$: $(X \circ Y)_e = X_e$ if $X_e \neq 0$ and $(X \circ Y)_e = Y_e$ otherwise.

\begin{definition} [{\cite[Def~4.1.1]{bjorner1999oriented}}]
    A collection $\om \sbe \{+, -, 0\}^E$ of covectors over a ground set $E$, or the pair $(\om, E)$, is an \df{oriented matroid} if it satisfies the following four covector axioms.
    \begin{enumerate}
        \item [(V0)] $\b 0 \in \om$.
        \item [(V1)] If $X \in \om$ then $-X \in \om$.
        \item [(V2)] If $X,Y \in \om$ then $X \circ Y \in \om$.
        \item [(V3)] If $X,Y \in \om$ and $e \in S(X,Y)$, then there exists $Z \in \om$ such that $Z_e = 0$ and $Z_f = (X \circ Y)_f = (Y \circ X)_f$ for all $f \notin S(X,Y)$.
    \end{enumerate}
\end{definition}

An element $e\in E$ with $X_e=0$ for all $X\in\om$ is called a \df{loop}. Two elements $e,f\in E$ are \df{parallel} if $X_e=X_f$ for all $X\in \om$. An oriented matroid is \df{simple} if it has no loops or parallel elements. From now on, all the oriented matroids that we consider will be simple.%

Every oriented hyperplane arrangement $\ha=\{H_i : i\in E\}$ provides an oriented matroid as follows: for every point $\b p\in \R^n$, construct a covector $\b v\in \{+, -, 0\}^E$ with $v_i=+,-,0$ depending on whether $\b p$ is in the positive side, negative side, or on $H_i$ respectively.
The collection of such covectors is always an oriented matroid, and the oriented matroids obtained this way are called \df{realizable}.
Most oriented matroids are not realizable, see \cite[Section 8.3]{bjorner1999oriented}.

The \df{big face lattice} $\fl(\om)$ of an oriented matroid $\om$ is the partial order on $\om$ in which covectors are compared component-wise using the relations $0 < +$ and $0 < -$ (but $+$ and $-$ incomparable).
If $\om$ is realizable by a hyperplane arrangement $\ha$, then $\fl(\om)$ is isomorphic to the face lattice of the fan induced by $\ha$, \ie to the dual of the face lattice of the associated zonotope.

An oriented matroid is \df{acyclic} if $(+,\dots,+)\in\om$. For $A\subseteq E$ and $X\in \{+,-,0\}^E$, define $_{-A}X$ to be the covector with entries $(_{-A}X)_e=-X_e$ if $e\in A$ and $(_{-A}X)_e=X_e$ otherwise. The oriented matroid $_{-A}\om=\{_{-A}X : X\in \om\}$ is called the \df{reorientation} of $A$ in $\om$. Observe that any loopless oriented matroid has an acyclic reorientation:
It suffices to reorient the set of negative entries of a tope (see below).

We will say that a polytope $\pol$ is \df{combinatorially equivalent} to an oriented matroid $\om$ if the face lattice of its normal fan (\ie the dual of the face lattice of $\pol$) is isomorphic to $\fl(\om)$.
Any zonotope is combinatorially equivalent to the oriented matroid of the associated hyperplane arrangement, but there can also be polytopes combinatorially equivalent to non-realizable oriented matroids (see below).
It turns out that our proof of \Cref{thm:combo_zonotope} extends to any polytope which is combinatorially equivalent to a $2$-cocubical oriented matroid.

It remains to define $2$-cocubical and to translate some of our main concepts to this setting.

The top elements of $\fl(\om)$ are called \df{topes}. They correspond to chambers of a hyperplane arrangement. For every $e\in E$, the set $H(e)\eqdef\{X\in\om\ :\ X_e=0\}$ is called a \df{hyperplane}. The maximal covectors within a hyperplane will be called its \df{walls}.
If $\om$ is loopless, topes correspond to covectors with no zero entries, and walls to those with a single zero entry. Each hyperplane partitions the remaining covectors into two \df{halfspaces} $H^+(e) \eqdef \{X\in\om\ :\ X_e=+\}$ and $H^-(e)\eqdef\{X\in\om\ :\ X_e=-\}$. An intersection of halfspaces is called a \df{supertope}. For $\c U\subseteq E$, we denote by $\c U^+$ the supertope $\bigcap_{e\in \c U}H^+(e)$.
The \df{boundary} of a supertope $\c T$ is the set of walls in~$\c T$ that belong to a single tope in~$\c T$. We say that a hyperplane $H(e)$ intersects the interior of a supertope~$\c T$ if there is a wall in $H(e)$ that is contained in $\c T$ but not in its boundary.
With this language, the notion of \emph{huddled set} extends verbatim to oriented matroids.

We will say that an oriented matroid is \df{$2$-cocubical} if the maximal covectors contained in the intersection of two hyperplanes have exactly two zero entries. With this, we can transfer the proof of \Cref{thm:combo_zonotope} directly to this set-up.
We first translate the algorithm.

\begin{algorithm} \label{alg:supertope_witness_oriented_matroid} \leavevmode

    \textbf{Inputs:}  
    \begin{itemize}
        \item An acyclic oriented matroid $\om$.
        \item A huddled set $\c V_0 \sbe E$ and its complement $\c U_0 \eqdef E\sm \c V_0$. 
        \item A set $S_0$ of walls contained in the supertope $\c U_0^+$ of~$\c U_0$. 
    \end{itemize}

    \textbf{Outputs:} 
    \begin{itemize}
        \item Huddled sets $\c V_i\supseteq  \c V_0 $ for $1\leq i\leq |\c U_0|$, with complements $\c U_i \eqdef E\sm \c V_i$, together with
         sets $S_i$ of walls of $\c H$ contained in $\c U_i^+$.

    \end{itemize}
    
    \textbf{Steps:} 
    For $1\leq i \leq |\c U_0|$:
    \begin{enumerate}
        \item Choose a wall $\polw_{i}$ which is contained in the boundary of $\c U_{i-1}^+$, and let $e_{i} \in \c U_{i-1}$ be such that $\polw_{i} \in H_{e_{i}}$. 
        \item Set $S_{i} \eqdef S_{i-1} \cup \{\polw_{i}\}$, $\c V_{i} \eqdef \c V_{i-1} \cup \{e_{i}\}$ and $\c U_{i} \eqdef \c U_{i-1} \setminus \{e_{i}\}$. 
    \end{enumerate}
\medskip
\end{algorithm}

We now proceed with proving \Cref{thmC}.

\begin{theorem}\label{thm:combo_om}
Let $\om$ be a simple $2$-cocubical oriented matroid on a ground set of size $m$, and $\pol$ a polytope combinatorially equivalent to $\om$. Then, $\dim(\DC(\pol)) \leq m$.
\end{theorem}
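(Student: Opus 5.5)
The plan is to transfer the proof of \Cref{thm:Aproof} verbatim, replacing the hyperplane arrangement by the oriented matroid $\om$ and \Cref{alg:supertope_witness} by \Cref{alg:supertope_witness_oriented_matroid}. Since $\om$ is simple, it is loopless, so it has an acyclic reorientation. Reorientation does not change the big face lattice, so $\pol$ is still combinatorially equivalent to the reoriented matroid. We may therefore assume that $\om$ is acyclic, with base tope $(+,\dots,+)$.

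Under the equivalence of $\pol$ with $\om$, topes correspond to vertices of $\pol$, and walls (covectors with a single zero, by looplessness) correspond to edges. Covectors with two zeros correspond to $2$-faces of $\pol$. The $2$-cocubical hypothesis says that each such covector lies below exactly four walls, so every $2$-face of $\pol$ is a quadrilateral. Thus \Cref{cor:inferfan} applies: two adjacent walls around such a codimension-two covector form a combinatorial witness for the four walls around it. Simplicity ensures that each wall lies in a unique hyperplane $H(e)$, so the map from walls to elements $e$ is well defined, as Step~(1) of the algorithm requires.

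Next, I would prove the oriented matroid analogue of \Cref{lem:MainAlgoYieldsCombinatorialWitness}. The claim is that, on an acyclic simple $2$-cocubical $\om$ with huddled $\c V_0$ and combinatorial witness $S_0$ for $\c U_0^+$, each output $S_i$ is a combinatorial witness of size $|S_0|+i$ for $\c U_i^+$. The argument runs by the same induction on $|\c V_i|$, and I would check each geometric step in covector language.

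\begin{itemize}
\item Split $\c U_i^+$ into $\polL = \c U_{i-1}^+ = H^+(e_i)\cap \c U_i^+$ and $\polN = H^-(e_i)\cap\c U_i^+$.
\item Walls of $H(e_i)$ inside $\c U_i^+$ form a connected set under adjacency. This uses the restriction (contraction) of $\om$ to $H(e_i)$ being an oriented matroid whose supertopes have connected tope graphs, a standard fact from \cite{bjorner1999oriented}. This lets us propagate from $\polw_i$ across codimension-two covectors using \Cref{cor:inferfan}.
\item Each such propagation also fixes the wall on the $\polN$ side.
\item The walls of $\polN$ adjacent to $H(e_i)$ form a witness set $S'$ for $\polN$. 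This follows by rerunning the algorithm on the reorientation ${}_{-A}\om$ towards a tope of $\polN$ adjacent to $H(e_i)$, and then using the induction hypothesis.
\item The identification $(\c U')^+=\polN$ uses that $\c V_{i-1}$ is huddled. This is a purely sign-vector argument.
\end{itemize}

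Finally, start with $\c V_0=\es$, $\c U_0=E$ and $S_0=\es$, and run the algorithm $m$ times. We must check that a boundary wall always exists while $\c U_{i-1}\neq\es$: the supertope is a proper subset of all topes, and tope-graph connectivity gives an edge leaving it. This yields a combinatorial witness of size $m$ for all walls of $\om$, hence for the edges of $\pol$. \Cref{lemma:main_idea} then gives $\dim(\DC(\pol))\le m$.

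The main obstacle is the topological facts used implicitly in the zonotope proof. These are that supertopes and their intersections with a hyperplane have connected wall/tope graphs, that boundaries are well defined, and that some hyperplane of $\c V_{i-1}$ bounding $(\c U'_{j-1})^+$ meets $H(e_i)$. For these I would invoke the convexity of supertopes in the tope graph and the fact that supertopes of oriented matroids are shellable balls (Björner–Las Vergnas–Sturmfels–White–Ziegler, Ch.~4), applied both to $\om$ and to its contraction at $e_i$.
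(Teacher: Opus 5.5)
Your proposal is correct and follows the paper's proof. Both reorient $\om$ to be acyclic, run \Cref{alg:supertope_witness_oriented_matroid} from $\c V_0=S_0=\es$, and replay the induction of \Cref{lem:MainAlgoYieldsCombinatorialWitness} in covector language, with T-convexity of supertopes replacing the geometric connectivity arguments. Your extra checks (quadrilateral $2$-faces, existence of boundary walls, a hyperplane $G_j$ meeting $H(e_i)$) are details the paper leaves implicit.

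One small correction: the identification $(\c U')^+=\polN$ is not purely sign-vectorial. To conclude that an $H(f)$ with $f\in\c V_{i-1}$ and walls in both $\polL$ and $\polN$ must meet $H(e_i)$ inside $\c U_i^+$, you again need T-convexity, in the contraction at $f$; your final paragraph already provides this.
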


\begin{proof}
Up to reorientation, we can suppose that $\om$ is acyclic.
Letting $E$ be the ground set of $\om$, we set $\c U_0 = E$, $\c V_0 = \es$, and $S_0 = \es$ and apply \Cref{alg:supertope_witness_oriented_matroid} to these inputs. We claim that $S_m$ is a combinatorial witness set for the normal fan of $\pol$ of size~$m$. This will conclude the proof by \Cref{lemma:main_idea}.
All the notation below deliberately mimics that of the proof of \Cref{lem:MainAlgoYieldsCombinatorialWitness}, but it applies to an oriented matroid instead.

We follow (almost verbatim) the proof of \Cref{lem:MainAlgoYieldsCombinatorialWitness} and prove by induction on $i$ that after executing \Cref{alg:supertope_witness_oriented_matroid} we obtain for each $0\leq i\leq m$ a combinatorial witness set $S_i$ of size~$i$ for the supertope $\c U_i^+$.
Throughout, we will simplify the notation and identify the covectors of $\om$ with the corresponding cells of $\Nf(\pol)$ via the given combinatorial isomorphism.
In essence, the reader should think of taking a fixed polytope $\pol$ combinatorially equivalent to $\c M$: the combinatorics of the oriented matroid $\c M$ tells us in which order we should perform the applications of \Cref{cor:inferfan}, but we actually perform these applications on the (normal fan of the) polytope $\pol$.

The case $i=0$ is trivial.
If $i\geq 1$, let $\polw_i$ be the wall chosen in the $i^{\text{th}}$ iteration of the algorithm and $e_i$ the element of $E$ such that $\polw_i\in H(e_i)$.
We denote $H_i \eqdef H(e_i)$.
Define $\polN \eqdef H_i^- \cap \c U_{i}^+$ (for `New') and $\polL \eqdef H_i^+ \cap \c U_{i}^+=\c U_{i-1}^+$ (for `oLd').  
Note that the interior walls of $\c U_{i}^+$ are either on $H_i$, or inside $\polL$ or $\polN$.
By the induction hypothesis, $S_{i-1}$ is a combinatorial witness set for $\polL$.

Moreover, every wall of $H_i$ inside $\c U_{i}^+$ is also dependent on~$S_i$. We order the walls of $H_i\cap \c U_{i}^+$ $\polw_i^0, \polw_i^1,\dots, \polw_i^r$ in such a way that every $\polw_i^j$ with $j>0$ is adjacent to at least one of the previous walls (which we can do by the T-convexity of supertopes, see \cite[Section 4.2]{bjorner1999oriented}), and prove it by induction on~$j$.
If there is a single wall $\polw_i$, the claim is straightforward. If $j>0$, then $\polw_i^j$ is adjacent to some $\polw_i^k$ with $k<j$ that is dependent on $S_i$ by induction.
There is a single coordinate $f\neq e_i$ separating $\polw_i^j$ and $\polw_i^k$.
The hyperplane $H(f)$ must belong to $\c V_i$, as it intersects $\c U_i^+$. By the $2$-cocubical property, there are exactly two other walls that contain $\polw_i^j\cap \polw_i^k$.
One of them is in $H^+_i$, hence in $\polL$, and is therefore dependent on $S_i$.
By \Cref{cor:inferfan}, $\polw_i^j$ is also dependent on $S_i$. 

Let $S'$ be the set of walls adjacent to $H_i\cap\c U_{i}^+$ that are contained in $\polN$.
The application of \Cref{cor:inferfan} in the previous step also implies that all walls in $S'$ are dependent on $S_i$. The next step is to prove that $S'$ is a combinatorial witness set for $\polN$.
Let $\c V'$ be the set of hyperplanes containing the elements of $S'$ and $\c U' = E \sm \c V'$.  Observe that $|S'| = |\c V'| \leq |\c V_i| = i$.
   
Let $\om'$ be the reorientation of $\om$ obtained by orienting all hyperplanes towards a tope inside $\polN$ that is incident to $H_i$. 
We set $S_0' = \es, \c V_0' = \es$, and $\c U_0 '= E$ and apply \cref{alg:supertope_witness_oriented_matroid} restricting our choice of walls $\polw_i$ to those in $S'$.

If $\c V_{j-1}'\neq \c V'$, then there exists a hyperplane $G_j$ on the boundary of $(\c U_{j-1}')^{+}$ that intersects $H_i$. Therefore, there is a wall $\polw_j'$ which belongs to $S'$ and lies within $G_j$. We choose this wall $\polw_j'$ in Step~(1) of \cref{alg:supertope_witness_oriented_matroid}.
After $m$ iterations, we have 
$S_m'=S'$. By induction we deduce that $S'$ is a combinatorial witness set for the supertope $(\c U_m')^+=(\c U')^+$.

Now we observe that $(\c U')^+=\polN$. If this were not the case, $\c U'$ would contain some element $f\in\c V_{i-1}$ whose associated hyperplane $H(f)$ has a wall in $\polN$.
However, such a hyperplane $H(f)$ would also have a wall in $\polL$ (because $\c V_{i-1}$ is huddled by induction), and therefore would also have a covector in $H_i$, consequently $f\in \c V'$ (so $f\notin \c U'$) by construction.

We conclude that $S'$ is a combinatorial witness for $\polN$.
Since the walls in $S'$ are dependent on $S_i$, we get that all the walls in the interior of $\polN$ are also dependent on $S_i$.
This shows that $S_i$ is a combinatorial witness for all of $\c U_i^+$ and concludes the proof.
\end{proof}

\subsection{A cubical 3-polytope with a 3-dimensional deformation cone but only 1 edge class}\label{ssec:CubicalNotWork}

After working with $2$-cubical combinatorial zonotopes, and extending to $2$-cubical pseudozonotopes, it is natural to wonder if our results extend to cubical polytopes.
Sadly, no.

Let $\pol$ be a $2$-cubical polytope (\eg a cubical polytope).
Two edges of $\pol$ are \df{opposite} if they belong to a common $2$-face and are not adjacent.
The \df{edge classes} of $\pol$ are the equivalence classes of (the transitive closure of) the relation ``being opposite in a quadrilateral face''.
Note that, in a zonotope (resp.\ in a $2$-cocubical oriented matroid), the edge classes correspond to classes of parallel edges (resp.\ to walls on the same hyperplane), and the number of edge classes is precisely the number of generators of the zonotope (resp.\ the number of elements in the ground set).
Hence, one may hope that if $\pol$ is a cubical polytope or even a $2$-cubical polytope, then $\dim(\DC(\pol))$ is bounded above by the number of edge classes of $\pol$.
This is false, as shown by the following example.

\begin{example}\label{exm:CubicalNotWork}
Consider the tetragonal trapezohedron $\pol[Trap]$ drawn in \Cref{fig:tet_trap}.
It is a $2$-cubical $3$-dimensional polytope.
It has only $1$ edge class, yet it is not indecomposable.
By \Cref{lem:smil_app}, since $\pol[Trap]$ has no triangle, $\dim(\DC(\pol[Trap])) \geq 3$.
A quick computation shows that $\dim(\DC(\pol[Trap])) = 3$.
\end{example}

\begin{figure}[H]
    \centering
    \includegraphics[width=0.2\linewidth]{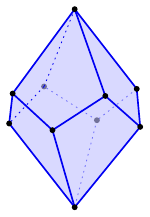}\hspace{1.5cm}%
    \includegraphics[width=0.35\linewidth]{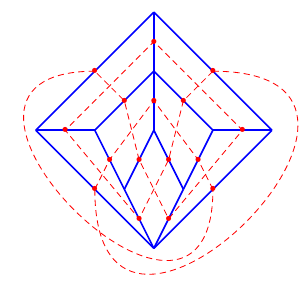}
    \caption{(Left) The tetragonal trapezohedron $\pol[Trap]$ whose deformation cone is $3$-dimensional.
    (Right) A planar representation of (the graph of) $\pol[Trap]$ where edges opposite in a quadrilateral are linked by a \textcolor{red}{red} arc: there is a single class.
    }
    \label{fig:tet_trap}
\end{figure}

\printbibliography

\end{document}